\newcommand{\rmd}{\mathrm{d}}
\newcommand{\rmD}{\mathrm{D}}
\newcommand{\bbD}{\mathbb{D}}
\newcommand{\bbE}{\mathbb{E}}
\newcommand{\bbF}{\mathbb{F}}
\newcommand{\bbG}{\mathbb{G}}
\newcommand{\bbN}{\mathbb{N}}
\newcommand{\bbR}{\mathbb{R}}
\newcommand{\bbZ}{\mathbb{Z}}
\newcommand{\frg}{\mathfrak{g}}
\newcommand{\frs}{\mathfrak{s}}
\newcommand{\fro}{\mathfrak{o}}
\newcommand{\calF}{\mathcal{F}}
\newcommand{\calO}{\mathcal{O}}
\newcommand{\calP}{\mathcal{P}}
\newcommand{\calT}{\mathcal{T}}
\DeclareMathOperator{\End}{End}
\newcommand{\beq}{\begin{equation}}
\newcommand{\eeq}{\end{equation}}
\newcommand{\id}{\mathrm{id}}
\newcommand{\gt}{GT}
\newcommand{\pt}{PT}
\newcommand{\hol}{\mathrm{Hol}}
\newtheorem{theorem}{Theorem}[section]
\newtheorem{lemma}[theorem]{Lemma}
\newtheorem{corollary}[theorem]{Corollary}
\newtheorem{proposition}[theorem]{Proposition}
\theoremstyle{definition}
\theoremstyle{remark}
\newtheorem{remark}{Remark}[section]
\theoremstyle{plain}
\begin{document}

\title{Exact formulas for the approximation of connections and curvature}

\author{Snorre H. Christiansen\footnote{Department of Mathematics, University of Oslo, PO box 1053 Blindern, NO-0316 Oslo, Norway. email: {\tt snorrec@math.uio.no} }}

\date{}

\maketitle

\begin{abstract}
First we express the holonomy along a boundary curve as the integral on the domain, of an expression which is linear in the curvature. Then we provide a rigorous justification of the definition of curvature in Regge calculus.
\end{abstract}

\bigskip

\begin{flushright}
\emph{Je t'apporte l'enfant d'une nuit d'Idum\'ee!}\\
Don du po\`eme, Mallarm\'e.
\end{flushright}

\section{Introduction}

We present two results on connections and curvature that aim to relate the continuous and the discrete. Whether nature is one or the other, remains open.

The first result was inspired by the desire to extend the Lattice Gauge Theory initiated by Wilson \cite{Wil74}, to a higher order method. While we did not quite achieve this goal, a formula was obtained, that might be of independent interest. It expresses the holonomy around a closed curve as an exact integral which is linear in the curvature. This continues our earlier investigations on LGT \cite{ChrHal09BIT}\cite{ChrHal11IMA}\cite{ChrHal11SINUM}\cite{ChrHal12JMP}, which were concerned with convergence analysis, mainly when the gauge field describes electromagnetism, and extending the method to simplicial meshes (rather than the cubical ones that are customary).

The second result is a justification of the definition of curvature in the calculus of Regge \cite{Reg61}. Those provided in \cite{Reg61} and \cite{FriLee84} were not found to be completely rigorous. Earlier \cite{Chr04M3AS}\cite{Chr11NM}, we have related Regge calculus to finite elements  and studied linearization. Here we present a result on the non-linear method.

\section{Definitions}
We present here some some notions on connections and curvature, to fix notations. A standard reference on the subject is \cite{KobNom63}. We have mainly used \cite{Tay96II} (Appendix C).

Let $V$ be a finite dimensional Euclidean vector space. The space of endomorphisms of $V$ (that is, linear maps $V \to V$) is denoted $\End(V)$. Let $\bbG$ be a closed subgroup of the orthogonal endomorphims of $V$ and $\frg$ its associated Lie algebra. 

Given a function $Q: S \to \bbG$ one transforms elements of $\Omega^k(S) \otimes V$ as follows:
\begin{equation}
\Phi \mapsto Q \Phi.
\end{equation}
One also transforms connection one-forms $A \in \Omega^1(S) \otimes \frg$ as follows:
\begin{equation}
A \mapsto \gt_Q(A) = Q A Q^{-1} - (\rmD Q) Q^{-1}.
\end{equation}
This formula ensures that we have:
\begin{equation}
\nabla_{\gt_Q(A)}{Q\Phi} = Q(\nabla_A \Phi).
\end{equation}

Parallel transport with respect to $A$, along a curve $\gamma: [a,b] \to S$, from $x$ to $y$, is denoted:
\begin{equation}
\pt_A(\gamma).
\end{equation}
It is defined as the linear map $V \to V$, which to a vector $u(x)\in V $, associates $u(y) \in V$ in such a way that there is a field $u$, defined on $\gamma$, that satisfies:
\begin{equation}
\nabla_{A} u (\dot \gamma) = 0.
\end{equation}
In the commutative case we have the formula:
\begin{equation}
\pt_A(\gamma)= \exp( -\int_\gamma A ).
\end{equation}
If the endpoint $y$ of $\gamma$ is  also its origin $x$, one speaks of a holonomy, and we denote it by:
\begin{equation}
\hol_A(\gamma).
\end{equation}

Parallel transport along a curve $\gamma$ from $x$ to $y$, behaves as follows under gauge transformations:
\begin{equation}\label{eq:gtpt}
\pt_{\gt_Q(A)} (\gamma) Q(x) = Q(y)\pt_{A} (\gamma). 
\end{equation}
In particular, around a closed curve from $x$ to $x$, we get:
\begin{equation}\label{eq:holtransf}
\hol_{\gt_Q(A)} (\gamma) = Q(x) \hol_{A}(\gamma)  Q(x)^{-1}.
\end{equation}

The curvature of $A$ is denoted $\calF(A)$:
\begin{equation}
\calF(A) = \rmd A + 1/2 [A, A].
\end{equation}
We have:
\begin{equation}
\calF(\gt_Q(A))= Q \calF(A) Q^{-1}.
\end{equation}

\section{Holonomy from curvature}

It is well known that the holonomy around a curve, minus the identity, is a good approximation of the integral of the curvature on the surface the curve bounds, in the sense that the difference between the two is smaller by one order of the length of the curve, see e.g. \cite{Tay96II} (Appendix C, Proposition 5.1). That is, for small domains $T$:
\begin{equation}\label{eq:holfromfest}
\hol_A(\partial T) - I  =  - \int_T \calF(A)(x) \rmd x + \calO( (\textrm{area}( T))^{3/2} ).
\end{equation}
This fact is the basis for Lattice Gauge Theory, introduced in \cite{Wil74}. See for instance in \cite{ChrHal12JMP}, how Proposition 2 is used as an ingredient to prove consistency. It turns out that in discretizations, the left hand side has better invariance properties than the right hand side, under discrete gauge transformations. Discrete gauge invariance is a crucial property, linked to charge conservation by Noether's theorem.

In the next proposition we transform this estimate into an exact identity, expressing the holonomy as an integral, which is linear in the curvature. The estimate (\ref{eq:holfromfest}) can easily be deduced from the proposed identity. The original motivation was construct a discretely gauge invariant discretization of Yang-Mills action, with higher orders of convergence than classical LGT. In this we have not yet succeeded.

\begin{proposition}
Suppose $T$ is an oriented rectangle. Define, for any $x \in T$,  two paths, $\gamma^-(x)$ and $\gamma^+(x)$, as follows. The face is equipped with two coordinates determined by the axes of $T$, compatible with its orientation. The origin of $T$ has coordinates $(0,0)$ and the opposite vertex in $T$ has coordinates $(a,b)$. We put $x = (x_0, x_1)$ and let the paths consists of straight lines joining the following points: 
\begin{align}
\gamma^-(x)& : (0,0) \to (x_0, 0) \to (x_0, x_1),\\
\gamma^+(x)& : (x_0,x_1) \to (x_0, b) \to (0, b) \to (0,0).
\end{align}
Then we have:
\begin{equation}\label{eq:holfromf}
\hol_A(\partial T) - I  =  - \int_T \pt_A(\gamma^+(x)) \calF(A)(x) \pt_A (\gamma^-(x)) \rmd x.
\end{equation}
\end{proposition}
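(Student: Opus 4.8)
The plan is to realise $\hol_A(\partial T)$ as the endpoint of a one-parameter sweep and reduce the statement to the fundamental theorem of calculus in the horizontal variable. The key geometric observation is that, as an unparametrised loop, the concatenation $\gamma^+(x)\cdot\gamma^-(x)$ traverses exactly the boundary of the sub-rectangle $[0,x_0]\times[0,b]$, independently of $x_1$: the two vertical pieces at abscissa $x_0$ (one in $\gamma^-$ up to height $x_1$, one in $\gamma^+$ from $x_1$ up to $b$) join into the full right edge. Writing $G(x_0)=\hol_A(\sigma_{x_0})$ for the holonomy of this expanding rectangle based at the origin, one has $G(0)=I$ (a degenerate back-and-forth loop) and $G(a)=\hol_A(\partial T)$. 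Hence $\hol_A(\partial T)-I=\int_0^a G'(x_0)\,\rmd x_0$, and it suffices to prove the inner identity
\begin{equation}
G'(x_0)=-\int_0^b \pt_A(\gamma^+(x_0,x_1))\,\calF(A)(x_0,x_1)\,\pt_A(\gamma^-(x_0,x_1))\,\rmd x_1 .
\end{equation}

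To compute $G'(x_0)$ I would split $\sigma_{x_0}$ into its bottom edge, right edge and top-left return, so that $G=B\,R\,A_0$, where $A_0(x_0)$ transports along the bottom to $(x_0,0)$, $R(x_0)$ transports up the right edge, and $B(x_0)$ transports from $(x_0,b)$ back to the origin. Differentiating the two transports whose paths merely extend as $x_0$ grows is elementary: using $\frac{\rmd}{\rmd t}U=-A(\dot\gamma)U$ one finds that $A_0'$ and $B'$ produce corner terms carrying the horizontal connection component $A(\partial_{x_0})$ evaluated at $(x_0,0)$ and $(x_0,b)$. The genuine work is the right-edge factor $R(x_0)=W(b;x_0)$, where $W(\cdot;x_0)$ solves the transport ODE along $\{x_0\}\times[0,b]$ with $x_0$ entering only through the coefficient; differentiating that ODE in $x_0$ and solving by variation of parameters yields $R'(x_0)=-\int_0^b W(b)W(s)^{-1}\,\partial_{x_0}A(\partial_{x_1})(x_0,s)\,W(s)\,\rmd s$.

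Assembling $G'=B'RA_0+BR'A_0+BRA_0'$ and recognising the sub-transports $\pt_A(\gamma^-(x_0,s))=W(s)A_0$ and $\pt_A(\gamma^+(x_0,s))=B\,W(b)W(s)^{-1}$, the only remaining step is to convert $\partial_{x_0}A(\partial_{x_1})$ into curvature. I would substitute the component identity $\partial_{x_0}A(\partial_{x_1})=\calF(A)(\partial_{x_0},\partial_{x_1})+\partial_{x_1}A(\partial_{x_0})-[A(\partial_{x_0}),A(\partial_{x_1})]$ coming directly from $\calF(A)=\rmd A+\tfrac12[A,A]$. The curvature part reproduces the claimed integrand exactly. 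The two leftover terms, together with the corner terms from $A_0'$ and $B'$, then cancel by the elementary identity obtained from integrating $\frac{\rmd}{\rmd x_1}\bigl(W^{-1}A(\partial_{x_0})W\bigr)=W^{-1}\bigl(\partial_{x_1}A(\partial_{x_0})-[A(\partial_{x_0}),A(\partial_{x_1})]\bigr)W$ from $0$ to $b$ and conjugating by $W(b)$.

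The main obstacle is the middle step: correctly differentiating the non-abelian, ODE-defined right-edge transport in the parameter $x_0$ while keeping track of orderings, since here the group is non-commutative and the naive abelian computation (where everything collapses to $\frac{\rmd}{\rmd x_0}\exp(-\int_{[0,x_0]\times[0,b]}\calF)$) no longer applies. Once the variation-of-parameters formula and the conjugation identity above are in place, the cancellation of the non-curvature terms is purely algebraic, and integrating the inner identity over $x_0\in[0,a]$ gives (\ref{eq:holfromf}). A minor technical point to address is that parallel transport is invariant under orientation-preserving reparametrisation, which legitimises treating $\gamma^+(x)\cdot\gamma^-(x)$ and $\sigma_{x_0}$ as the same loop and differentiating factorwise.
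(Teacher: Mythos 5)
Your proof is correct, but it takes a genuinely different route from the paper's. Both arguments share the same analytic kernel: realising $\hol_A(\partial T)$ as the endpoint of the holonomy $G(x_0)$ of the expanding rectangle $[0,x_0]\times[0,b]$, and computing the $x_0$-derivative of the ODE-defined vertical transport by variation of parameters (your formula for $R'(x_0)$ is exactly the paper's identity $\partial_1(P^{-1}\partial_0 P) = -P^{-1}(\partial_0 A_1)P$ integrated in $x_1$). The difference lies in how the non-curvature terms are disposed of. The paper first observes that the identity (\ref{eq:holfromf}) is covariant under gauge transformations, then constructs the specific gauge $Q(x) = \pt_A(\alpha(x))$, transporting to the origin along a horizontal-then-left-edge path, in which $A'_0 \equiv 0$ and $A'_1(0,\cdot) = 0$. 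In that axial gauge the bottom, top and left transports are all the identity, so your factors $B$ and $A_0$ disappear, the corner terms never arise, and $\calF(A') = \partial_0 A'_1\, \rmd x_0 \wedge \rmd x_1$ with no commutator; the whole proposition then collapses to the single variation-of-parameters computation. You instead work in an arbitrary gauge, keep the three-factor decomposition $G = B R A_0$, and must cancel the corner terms carrying $A(\partial_{x_0})$ at $(x_0,0)$ and $(x_0,b)$ against the $\partial_{x_1}A(\partial_{x_0}) - [A(\partial_{x_0}),A(\partial_{x_1})]$ part of the curvature, via the conjugation identity $\frac{\rmd}{\rmd x_1}\bigl(W^{-1}A(\partial_{x_0})W\bigr) = W^{-1}\bigl(\partial_{x_1}A(\partial_{x_0}) - [A(\partial_{x_0}),A(\partial_{x_1})]\bigr)W$; I checked this cancellation and it does close up exactly, so there is no gap. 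What your version buys is self-containedness: you never need to prove gauge covariance of both sides of (\ref{eq:holfromf}) (the paper's step (i)), nor to verify that the constructed $Q$ is smooth and yields the claimed axial form. What the paper's version buys is brevity and conceptual clarity: the gauge choice kills in advance every term your algebra cancels by hand, and it highlights that the sandwiching transports $\pt_A(\gamma^\pm(x))$ are precisely what restores gauge covariance of the curvature integrand.
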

\begin{proof}
 (i) Remark first that if identify (\ref{eq:holfromf}) holds for a gauge potential $A$ then it holds for any gauge transformation $\gt_Q(A)$ of $A$.
 
 (ii) Given $x\in T$ define a path $\alpha(x)$ consisting of straight lines as follows:
 \begin{equation}
 \alpha: (x_0,x_1) \to (0,x_1) \to (0,0).
 \end{equation}
 The path, followed in reverse is denoted $\alpha(x)^{-1}$.
 Define $Q:f \to \bbG$ as follows: 
 \begin{equation}
 Q(x) = \pt_A (\alpha(x)).
 \end{equation}
 
The gauge potential $A' = \gt_Q(A)$ now satisfies, by (\ref{eq:gtpt}):
\begin{equation}
 \pt_{A'}(\alpha(x)^{-1}) = \id_V,
\end{equation}
therefore,  for all $x_0 \in [0,a]$, $x_1 \in [0,b]$:
 \begin{align}
 A'_0(x_0,x_1) & = 0,\\
 A'_1(0,x_1) & = 0.
 \end{align}
 
 (iii) For $A$ of the above form, the proposition is proved for fixed $b$, differentiating with respect to $a$. More precisely, for any point $(x_0,x_1) \in T$, let $P(x_0, x_1)$ be the parallel transport, according to $A$ along the segment from $(x_0,0)$ to $(x_0,x_1)$. 
We have:
\begin{equation}
\hol_A(\partial T) = P(a,b).
\end{equation}
We have:
\begin{equation}
\partial_1 P (x) = -A_1(x) P(x).
\end{equation}
We deduce:
\begin{equation}
\partial_1 (P(x)^{-1} \partial_0 P(x)) = -P(x)^{-1} \partial_0 A_1 (x) P(x).
\end{equation}
Hence:
\begin{equation}
\partial_0 P (x_0,b) = -\int_{0}^{b} P(x_0,b) P(x_0,x_1)^{-1} \partial_0 A_1 (x_0,x_1) P(x_0,x_1) \rmd x_1.  
\end{equation}
So that:
\begin{equation}
P(a,b) = I - \int_0^a \int_0^b  P(x_0,b) P(x_0,x_1)^{-1} \partial_0 A_1 (x_0,x_1) P(x_0,x_1) \rmd x_0 \rmd x_1.
\end{equation}
This can be interpreted as the claimed identity.
\end{proof}

\begin{remark}
In the abelian case one can give a much simpler proof of this identity. We consider the rectangle $T=T(a,b)$ as a function of the upper right corner.
Define:
\begin{align}
 H(a,b) &= \hol_A(\partial T (a,b)),\\
 &= \exp(-\int_{\partial T(a,b)} A),\\
 &= \exp(-\int_{T(a,b)} \rmd A).
 \end{align}
From the last expression we deduce:
\begin{align}
 \partial_x H(x,b) = - H(x,b) \int_0^b \rmd A (x,y)\rmd y.
\end{align}
So we can write:
\begin{equation}
H(a,b) = I - \int_0^a \int_0^b H(x,b) \rmd A (x,y)\rmd x \rmd y,
\end{equation}
From the second to last expression on the other hand, we deduce that for $y \in [0,b]$ : 
\begin{equation}
 \pt_A(\gamma^+(x,y)) \pt_A (\gamma^-(x,y)) = H(x,b).\\
\end{equation}
so that we have obtained the desired identity.
\end{remark}

\section{A justification of Regge Calculus}

\subsection{Definition of Regge Caluclus}
Regge calculus \cite{Reg61} can be defined as follows.

Let $\calT$ be a simplicial complex, that is, a finite set of of finite non-empty sets. The elements of $\calT$ are called simplices and are thought of as sets of vertices. For each simplex $T\in \calT$, its geometric realization is the set:
\begin{equation}
|T| = \{ f : T \to \bbR \ : \ \sum_{x \in T} f(x) = 1, \ \forall x \in T \ f(x) \geq 0 \}.
\end{equation}
A vertex $x \in T$ can be identified with the characteristic function of $\{ x \}$ on $T$, which is an element of $|T|$. For $T' \subseteq T$ there is a unique affine map $\Phi_{TT'} : |T'| \to | T |$ which is the identity on vertices of $T'$.

The geometric realization of $\calT$ is :
\begin{equation}
\coprod_{T \in \calT} | T |  \big / \sim.
\end{equation}
where the equivalence relation is the smallest satisfying:
\begin{equation}
\Phi_{TT'} (x) \sim x, \textrm{ whenever }  x \in |T'| \textrm{ and }  T' \subseteq T.
\end{equation}
In particular the maps $\Phi_{TT'}$ are identified with inclusions.

Suppose that $|\calT| $ is an oriented $n$-dimensional manifold. In Regge calculus one assigns a real number to each edge. These numbers, interpreted at edge lengths squared, determine a constant metric $\rho$ on each simplex.

Then, to each codimension $2$ simplex $h$ (called hinge) in $\calT $ one associates a so-called deficit angle $d_h$ as follows. Compute, for each $n$-simplex in $\calT$ containing the hinge, the dihedral angle between the two faces arriving at the hinge. Add these dihedral angles, and substract this number from $2 \pi$, to get the deficit angle $d_h$. Let $a_h$ be the area of the hinge. The action defined by Regge to mimick the Einstein-Hilbert action is:
\begin{equation}
\rho \mapsto \sum_h d_h a_h.
\end{equation}
Critical point of this action are discrete analogues of Einstein metrics.

One goes even further and asserts that the scalar curvature is a measure on $|\calT |$ concentrated to the hinges and given by:
\begin{equation}
\psi \mapsto  \sum_h d_h \int_h \psi,
\end{equation}
where one sums over hinges $h$, the integrals of $\psi$ on $h$ equipped with the induced metric. The difficulty, in order to make sense of this assertion, is that scalar curvature is a non-linear expression of the metric involving second order derivatives. That it could be well defined for some discontinuous metrics is miraculous. 

Regge \cite{Reg61} proposed a justification involving an averaging argument and an appeal to the Gauss-Bonnet theorem. An alternative justification can be found in \cite{FriLee84}, based on an imbedding in higher dimensional vector space (see in particular \S 3 and Theorem 3.1).  A sequence of smooth metrics approximating the Regge metric is considered, and one wants to obtain the curvature of the limit (as defined by Regge) as the limit of curvatures (as usually defined for smooth metrics). However we don't think the passage to the limit is valid for all approximating sequences, and \cite{FriLee84} is vague about which approximating sequences are used. 

Ideally one might want to identify a topology on the space of metrics, with respect to which this amounts to continuity of the curvature map (into the space of measures). We have not identified such a topology. But in this paper we prove that the limiting procedure is valid for the canonical approximating sequences, obtained by smoothing by convolution.

Various results connecting integrals of curvature with holonomies can be found in Appendix C.5 of \cite{Tay96II}. Based on such considerations we are able to evaluate the curvature of the smoothed Regge metrics. Our arguments can be see to reprove a variant of the Gauss-Bonnet theorem. We hope the reader will share our pleasure in doing so.

\subsection{Justification in two dimensions}
Here we concentrate on the two-dimensional case ($n = 2$).

Let $\bbE$ be a two dimensional Euclidean vector space, whose metric is denoted $g$ and serves as a reference. Half-lines emanating from the origin split the space into $I \in \bbN$ sectors. The half-lines are indexed by a cyclic variable $i\in \bbZ / I \bbZ$. The sector between $i$ and $i+1$ is indexed by $i + 1/2$.

In this context we consider a Regge metric $\rho$. It is constant in each sector, with value in the sector $i +1/2$ denoted $\rho_{i+1/2}$. Its pullback to the half-lines separating two sectors is well-defined,  that is, the restriction is the same from both sides, when evaluated on vectors parallel to the half-line.

Let $m_i$ denote the directing vector of half-line $i$ which has unit length, with respect to $\rho$. Let $\theta_{i+1/2}$ be the angle between the vectors $m_i$ and $m_{i+1}$ with respect to the metric $\rho_{i+1/2}$. The deficit angle, at the origin, is defined to be:
\begin{equation}
d= 2\pi - \sum_i \theta_{i+1/2}.
\end{equation}

Choose $\phi$, a smooth function on $\bbE$ with compact support in the unit ball and with integral $1$, with respect to $g$. For $\epsilon >0$ define the scaling:
\begin{equation}
\phi_\epsilon (x)= \epsilon^{-2} \phi(\epsilon^{-1} x),
\end{equation}
where the factor in front is chosen to preserve the value of the integral. Define the smoothed Regge metrics $\sigma_\epsilon$ by the following convolution product, computed with respect to $g$:
\begin{equation}\label{eq:smoothmet}
\sigma_\epsilon = \phi_\epsilon \ast \rho.
\end{equation}

We concentrate first on the metric $\sigma = \sigma_1$. We denote by $\nabla$ the Levi-Civita connection, $\kappa$ the scalar curvature and by $\mu$ the volume two-form of $\sigma$. Our goal is to prove that the function $\kappa$ has compact support and:
\begin{equation}
\int_\bbE \kappa \mu = d.
\end{equation}
We do this by evaluating the holonomy (with respect to $\nabla$) along a curve encircling the origin, at sufficient distance, in two different ways.

Let $x \mapsto (e_1(x), e_2(x))$ denote a choice of orthonormal oriented basis (at $x \in \bbE$). Given this frame, denote by $A$ the connection one-form of the Levi-Civita connection of $\sigma$. Thus:
\begin{equation}
A \in \Omega^1(\bbE) \otimes \frs \fro (2) .
\end{equation}
The Lie algebra $\frs \fro (2)$ is one-dimensional and spanned by the matrix:
\begin{equation}
J = \begin{bmatrix}
0 & -1 \\
1 & 0
\end{bmatrix}.
\end{equation}

\begin{proposition}\label{prop:holone}
 Let $T$ be a domain in $\bbE$ with a piecewise smooth boundary curve $\partial T$.
We have:
\begin{equation}
\hol_A (\partial T) = \exp((\int_T \kappa \mu) J ).
\end{equation}
\end{proposition}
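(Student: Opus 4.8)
The plan is to exploit that the structure group is effectively $\SO(2)$, whose Lie algebra $\so(2) = \bbR J$ is one-dimensional and hence abelian, so that the holonomy degenerates to an ordinary rather than path-ordered exponential. Since the metric $\sigma = \sigma_1$ is smooth, its Levi--Civita connection is smooth and $A$ is a genuine smooth one-form. First I would write $A = \alpha\, J$ with $\alpha \in \Omega^1(\bbE)$ an ordinary real-valued one-form; this is possible precisely because $\so(2)$ is spanned by $J$. The bracket of any two $\so(2)$-valued forms then vanishes, so the defining formula $\calF(A) = \rmd A + \tfrac12[A,A]$ collapses to $\calF(A) = \rmd A = (\rmd\alpha)\,J$.

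Next I would invoke the commutative holonomy formula recorded in Section~2, namely $\hol_A(\gamma) = \exp(-\int_\gamma A)$, which is legitimate here because $A$ takes values in an abelian Lie algebra and the path-ordering is therefore trivial. Applying Stokes' theorem on $T$ yields
\[ \hol_A(\partial T) = \exp\!\Big(-\int_{\partial T} A\Big) = \exp\!\Big(-\int_T \rmd A\Big) = \exp\!\Big(-\Big(\int_T \rmd\alpha\Big) J\Big). \]
At this point the entire claim reduces to the single scalar identity $\rmd\alpha = -\kappa\,\mu$, equivalently $\calF(A) = -\kappa\,\mu\, J$.

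The geometric heart of the argument is this last identification, and it is where I expect the real work to lie. Because $\alpha$ is the connection one-form of the Levi--Civita connection read in the oriented orthonormal frame $(e_1,e_2)$, the second Cartan structure equation expresses the $\so(2)$-curvature two-form $\rmd\alpha$ in terms of the curvature of $\sigma$; in two dimensions the quadratic term $\alpha^i_k \wedge \alpha^k_j$ contributes nothing to the single independent component, so $\rmd\alpha$ is directly proportional to $\mu$, with coefficient the Gaussian curvature. I would obtain this by computing $\rmd\alpha$ from the torsion-free first structure equation $\rmd e^i + \alpha^i_j \wedge e^j = 0$ for the coframe dual to $(e_1,e_2)$ and reading off the coefficient against $\mu$. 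The main obstacle is purely one of bookkeeping: reconciling the various sign and normalization conventions---the orientation of the frame, the sign built into $J$ and into the paper's parallel-transport convention $\partial_1 P = -A_1 P$, and the fact that in dimension two $\kappa$ is taken to be the Gaussian curvature---so that the computed sign is exactly $\rmd\alpha = -\kappa\,\mu$. Substituting this into the displayed chain then gives $\hol_A(\partial T) = \exp\big((\int_T \kappa\,\mu)\,J\big)$, as claimed.
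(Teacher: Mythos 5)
Your proposal follows the same route as the paper's proof: both exploit that $\so(2)$ is abelian so that $\calF(A)=\rmd A$ and the holonomy is an ordinary exponential, apply the commutative formula $\hol_A(\partial T)=\exp(-\int_{\partial T}A)$ together with Stokes' theorem, and reduce the whole claim to the single identity $\calF(A)=-\kappa\,\mu\,J$. The only difference is that where you sketch a derivation of this identity from the Cartan structure equations (correctly flagging the sign conventions as the delicate point), the paper simply cites it as equation (5.13) of Taylor's book, so your argument is correct and essentially the same as the paper's.
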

\begin{proof}
We denote by $F$ the curvature tensor, also in the frame $(e_1, e_2)$. We have:
\begin{equation}
F = \rmd A + \frac{1}{2} [A, A] = \rmd A \in \Omega^2(\bbE) \otimes \frs \fro (2).
\end{equation}
By Stokes theorem:
\begin{align}
\hol_A (\partial T)& = \exp (-\int_{\partial T} A ) =  \exp (-\int_{T} F ).%\\
\end{align}
But we also have (equation (5.13) in \cite{Tay96II}):
\begin{equation}
F= - \kappa J \mu.
\end{equation}
This concludes the proof.
\end{proof}

Next we compute the holonomy by parallel transporting along the curve. This is made easy by the following fact: 
\begin{lemma}\label{lem:nablam}
In the union of the sectors $i - 1/2$ and $i+ 1/2$, consider the subset $U_i$ of points whose distance to the boundary is strictly larger than $1$.

On $U_i$ we have $\nabla m_i = 0$.
\end{lemma}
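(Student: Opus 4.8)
The plan is to choose affine coordinates adapted to $m_i$ in which the smoothed metric $\sigma$ becomes a function of a single transverse variable, and then simply read off the vanishing of the relevant Christoffel symbols. First I would fix Cartesian coordinates $(x^1,x^2)$ on $\bbE$ whose first axis runs along the line $\ell_i$ carrying the half-line $i$, so that $m_i$ is a positive constant multiple of the constant field $\partial_1$ and $\ell_i = \{x^2 = 0\}$ near $U_i$. The role of the distance-to-boundary bound defining $U_i$ is the following localization: for $x \in U_i$ the ball $B(x,1)$ meets neither of the bounding half-lines $i-1$, $i+1$ nor the origin (the origin is a boundary point of the two-sector region, so $\dist(x,\text{origin}) > 1$ as well). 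Hence $B(x,1)$ lies inside the two-sector region and the only jump of $\rho$ it sees is across $\ell_i$, where it appears as a genuine straight interface rather than a half-line with an endpoint. Consequently, on the sampled ball the components $\rho_{ab}$ in these coordinates are the two-valued function $\rho_{ab}(\sign x^2)$.

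Second, since the convolution (\ref{eq:smoothmet}) with respect to $g$ is, in these constant affine coordinates, componentwise convolution against a probability density, and $\rho_{ab}$ depends on $x$ only through $\sign x^2$ on the sampled ball, the convolved components $\sigma_{ab}$ depend only on $x^2$ throughout $U_i$; in particular $\partial_1 \sigma_{ab} = 0$. The key extra input is that $\sigma_{11}$ is moreover constant: because $m_i$ is parallel to the half-line and the pullback of $\rho$ to it is two-sidedly well defined, $m_i$ has the same $\rho$-length from both sectors, giving $(\rho_{i-1/2})_{11} = (\rho_{i+1/2})_{11}$; convolving this constant leaves it unchanged, so $\partial_2 \sigma_{11} = 0$ too.

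Third, I would compute the Christoffel symbols of the first kind and specialize. Writing $\Gamma_{c,a1} = \tfrac12(\partial_a \sigma_{1c} + \partial_1 \sigma_{ac} - \partial_c \sigma_{a1})$ and discarding the $\partial_1$ term leaves $\Gamma_{c,a1} = \tfrac12(\partial_a \sigma_{1c} - \partial_c \sigma_{a1})$. For $a=1$ this is $-\tfrac12\,\partial_c \sigma_{11} = 0$ by constancy of $\sigma_{11}$; for $a=2$ the cases $c=1,2$ vanish using $\partial_2 \sigma_{11}=0$, $\partial_1(\cdot)=0$, and the symmetry $\sigma_{12}=\sigma_{21}$. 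Hence $\Gamma^c_{a1} = \sigma^{cd}\Gamma_{d,a1} = 0$ for all $a,c$, which says exactly $\nabla_{\partial_a} m_i = 0$, i.e. $\nabla m_i = 0$ on $U_i$.

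The step I expect to be most delicate is the localization: rigorously justifying that on $U_i$ the convolution sees only the single straight seam $\ell_i$, so that $\rho$ is honestly a function of $\sign x^2$ there, which rests on the distance bound forcing $B(x,1)$ to avoid both the vertex and the neighboring half-lines. Once that is secured the rest is the short symmetric computation above, and the conceptual heart is simply that smoothing a piecewise-constant metric across a straight interface yields a one-dimensionally layered metric whose interface-tangent direction is parallel.
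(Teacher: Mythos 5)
Your proof is correct in its core and follows essentially the same route as the paper: your Christoffel symbols of the first kind, $\Gamma_{c,a1}=\tfrac12(\partial_a\sigma_{1c}+\partial_1\sigma_{ac}-\partial_c\sigma_{a1})$, are exactly the Koszul formula $2\sigma(\nabla_X m,Y)=\partial_X\sigma(m,Y)+\partial_m\sigma(X,Y)-\partial_Y\sigma(m,X)$ that the paper evaluates on a constant frame $(m,n)$, and both arguments rest on the same two inputs, namely invariance of $\sigma$ in the direction $m_i$ (your $\partial_1\sigma_{ab}=0$) and constancy of $\sigma(m_i,m_i)$ (your $\partial_2\sigma_{11}=0$). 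The paper merely asserts these two facts, whereas you try to derive them from the convolution, which is worthwhile but is also where the one real issue lies.

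Your localization claim overstates what is true, in configurations that the lemma's hypotheses allow. You assert that for every $x\in U_i$ the ball $B(x,1)$ sees $\rho$ as the two-valued function $\rho_{ab}(\sign x^2)$, and conclude that $\sigma_{ab}$ depends only on $x^2$ \emph{throughout} $U_i$. This fails when one of the two sectors has opening angle greater than $\pi$ (nothing in the paper excludes this; with $I=2$ it is generic). In that case the union of the two sectors contains the ray opposite to the half-line $i$, and a ball $B(x,1)$ with $x\in U_i$ can straddle the full line $\ell_i$ on the far side of the origin while lying entirely inside a single sector; on such a ball $\rho$ is constant, not the sign function, and in general one obtains points of $U_i$ with the same $x^2$ but different values of $\sigma_{ab}$, so the global claim is false. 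The repair is to make the claim local, which is all your computation needs: for $x\in U_i$, either $B(x,1)$ meets the half-line $i$, and then convexity of the ball together with $\dist(x,0)>1$ forces every point of the ball with $x^2>0$ (resp. $x^2<0$) to lie in sector $i+1/2$ (resp. $i-1/2$), so the sign-function description holds on that ball and on nearby ones; or $B(x,1)$ misses the half-line $i$, in which case, being connected and avoiding the jump set of $\rho$ inside the cone, it lies in one open sector and $\rho$ is constant on it. In both cases $\sigma_{ab}$ is a function of $x^2$ alone in a neighborhood of $x$, and $\sigma_{11}$ is constant in both cases since $\rho_{11}$ takes the same value in the two sectors. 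With this local case split your Christoffel computation goes through verbatim and your proof coincides with the paper's.
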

\begin{proof}
For simplicity of notation, we write $m=m_i$. Recall that for any constant vector fields $X,Y$ on $U_i$, we have:
\begin{equation}
2 \sigma ( \nabla_X m , Y) = \partial_X \sigma(m, Y) + \partial_m \sigma(X,Y) - \partial_Y \sigma(m, X).
\end{equation}
Here, a vectorfields $Z$ acts on scalar fields as derivations, denoted $\partial_Z$.

Denote by $n$ a vector which is orthogonal to $m$ for the reference metric $g$. We use that $\sigma$ is invariant in the $m$ direction and that $\sigma(m,m)$ is constant, and compute:
\begin{align}
2 \sigma ( \nabla_m m, m) & = \partial_m \sigma(m, m) + \partial_m \sigma(m,m) - \partial_m \sigma(m, m) = 0,\\
2 \sigma ( \nabla_m m, n) & = \partial_m \sigma(m, n) + \partial_m \sigma(m,n) - \partial_n \sigma(m, m) = 0,\\
2 \sigma ( \nabla_n m, m) & = \partial_n \sigma(m, m) + \partial_m \sigma(n,m) - \partial_m \sigma(m, n) = 0,\\
2 \sigma ( \nabla_n m, n) & = \partial_n \sigma(m, n) + \partial_m \sigma(n,n) - \partial_n \sigma(m, n) = 0.
\end{align}
This concludes the proof.
\end{proof}

Let $\alpha_{i+1/2}$ be the angle at the origin of the sector $i+1/2$, computed with respect to the reference metric $g$. Elementary trigonometry shows that the union of the domains $U_i$ contains the exterior of the ball with radius:
\begin{equation}
r=\max_i 1/\cos (\pi /2 - \alpha_{i + 1/2} /2).
\end{equation}

From the preceding Lemma one gets:
\begin{corollary}  The scalar curvature $\kappa$ is supported in the ball  $B_g(0,r)$.
\end{corollary}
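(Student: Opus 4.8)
The plan is to prove that $\kappa$ vanishes identically on each set $U_i$, and then to cover the exterior of the ball by appealing to the trigonometric estimate stated just before the corollary.

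First I would invoke Lemma~\ref{lem:nablam}: on $U_i$ the vector field $m_i$ satisfies $\nabla m_i = 0$. Since $m_i$ has unit length for $\rho$, it is nowhere vanishing, so $U_i$ carries a nowhere-zero parallel section of the tangent bundle. The heart of the argument is then to turn this parallelism into the vanishing of $\kappa$. Expressing $m_i = c_1 e_1 + c_2 e_2$ in the orthonormal frame $(e_1,e_2)$, the Riemann curvature operator $R(X,Y)$ acts on the component column vector by the matrix $F(X,Y)$, where $F$ is the curvature two-form in that frame. From $\nabla m_i = 0$ we get $R(X,Y)m_i = 0$, hence $F(X,Y)(c_1,c_2)^\transp = 0$ for all tangent vectors $X,Y$. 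Now insert the identity $F = -\kappa J \mu$ established in the proof of Proposition~\ref{prop:holone}, and evaluate on $X=e_1$, $Y=e_2$, for which $\mu(e_1,e_2)\neq 0$. This reduces to $\kappa\, J(c_1,c_2)^\transp = 0$ on $U_i$. Because $J$ is invertible and $(c_1,c_2)\neq(0,0)$, we conclude $\kappa = 0$ on $U_i$.

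Finally, the elementary trigonometry noted before the corollary shows that $\bigcup_i U_i$ contains the complement of $B_g(0,r)$. Therefore $\kappa$ vanishes outside $B_g(0,r)$, which is exactly the asserted support property.

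The step I expect to be the main obstacle is the middle one. The subtlety is not computational but conceptual: one must use that the curvature operator, being $\frs\fro(2)$-valued and thus a scalar multiple of the invertible rotation $J$, cannot annihilate a nonzero vector unless $\kappa$ itself vanishes. This is precisely the two-dimensional phenomenon that the existence of a single nowhere-zero parallel vector field already forces flatness, and it is what makes Lemma~\ref{lem:nablam} strong enough to pin down the support of $\kappa$.
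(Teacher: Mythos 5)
Your proof is correct and takes the same route the paper intends: the paper deduces the corollary directly from Lemma~\ref{lem:nablam} together with the trigonometric remark that $\bigcup_i U_i$ covers the exterior of $B_g(0,r)$, leaving implicit precisely the step you spell out (a nowhere-vanishing parallel field forces $R(X,Y)m_i=0$, hence $\kappa=0$ since $F=-\kappa J\mu$ with $J$ invertible). Your filling-in of that two-dimensional flatness argument is accurate and consistent with the paper's conventions.
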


\begin{proposition} \label{prop:holtwo} Let $T$ be a domain containing the ball $B_g(0,r)$. We have:
\begin{equation}
\hol_A(\partial T) = \exp (- (\sum_i \theta_{i+1/2}) J ).
\end{equation}
\end{proposition}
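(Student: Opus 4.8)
The plan is to compute $\hol_A(\partial T)$ directly, by parallel transporting a single vector once around $\partial T$ and reading off the total rotation angle. Since $\frs\fro(2)$ is one-dimensional, the holonomy is a rotation $\exp(\beta J)$, so it suffices to follow the image of one nonzero vector to determine $\beta$.

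First I would reduce to a convenient curve. Since $\kappa$ vanishes outside $B_g(0,r)$ by the preceding corollary, the holonomy around $\partial T$ is unchanged as $\partial T$ is deformed within the curvature-free exterior; equivalently, by Proposition \ref{prop:holone} it depends on $T$ only through $\int_T \kappa\mu$, which is constant for every admissible $T \supseteq B_g(0,r)$. Hence I may take $T = B_g(0,R)$ with $R > r$ large, so that $\partial T$ is a circle lying entirely in $\bigcup_i U_i$ and meeting, inside each sector $i+1/2$, the deep region where the distance to both bounding half-lines exceeds $1$.

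The core of the argument uses Lemma \ref{lem:nablam}: on $U_i$ the vector $m_i$ is covariantly constant, hence parallel along any arc of $\partial T$ contained in $U_i$. I would transport a vector $v$ counterclockwise around $\partial T$, starting in the sector $1/2$ with $v = m_1$. While the curve stays in $U_i$, metric compatibility of $\nabla$ keeps the $\sigma$-angle between the parallel field $v$ and the parallel field $m_i$ constant. In the overlap $U_i \cap U_{i+1}$ — the part of sector $i+1/2$ far from its two bounding half-lines — both $m_i$ and $m_{i+1}$ are parallel; there the smoothing does not reach the discontinuities of $\rho$, so $\sigma = \rho_{i+1/2}$, and the $\sigma$-angle from $m_i$ to $m_{i+1}$ is exactly $\theta_{i+1/2}$, with the orientation convention $m_{i+1} = \exp(\theta_{i+1/2}J)m_i$. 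Re-expressing $v$ against the new reference $m_{i+1}$ therefore subtracts $\theta_{i+1/2}$ from its angle. Chaining these transitions across the $I$ sectors as the curve closes returns $v$ to the starting point rotated by $-\sum_i \theta_{i+1/2}$ relative to $m_1$, whence $\hol_A(\partial T) = \exp(-(\sum_i \theta_{i+1/2})J)$.

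The main obstacle is the careful bookkeeping. I must verify that the curve genuinely threads each overlap $U_i \cap U_{i+1}$, which is precisely why the radius $r = \max_i 1/\cos(\pi/2 - \alpha_{i+1/2}/2)$ is the threshold guaranteeing a nonempty, connected far region in every sector; and I must keep the signs and the orientation of the successive rotations consistent so that the contributions accumulate to $-\sum_i \theta_{i+1/2}$ rather than cancel. The remaining ingredients — that parallel transport is an isometry and that $\sigma$ agrees with $\rho_{i+1/2}$ away from the half-lines — are routine.
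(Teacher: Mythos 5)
Your proof is correct and follows essentially the same route as the paper: both rest on Lemma \ref{lem:nablam} to make each $m_i$ parallel along the arc of $\partial T$ inside $U_i$, splice the curve at points in the overlaps $U_i \cap U_{i+1}$ where $\sigma = \rho_{i+1/2}$, and accumulate a rotation by $-\theta_{i+1/2}$ at each transition, exactly as in the paper's composition $\pt_{\nabla}(\gamma_{I-1}) \circ \ldots \circ \pt_{\nabla}(\gamma_0)$. The only cosmetic differences are that you track a single transported vector (exploiting that $\SO(2)$ is abelian) where the paper transports the frames $(m_i, n_i^\pm)$ and multiplies change-of-basis matrices, and your preliminary reduction to a circular boundary via Proposition \ref{prop:holone}, which the paper does not need.
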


\begin{proof}
For each $i$, define vectors $n_i^+$ and $n_i^-$ such that $(m_i, n_i^\pm)$ is an orthonormal oriented basis with respect to the metric $\rho_{i\pm 1/2}$.

Also, in each sector $i+1/2$, choose a point $p_{i+1/2}$ on the boundary curve, such that:
\begin{equation}
p_{i+1/2} \in U_i \cap U_{i+1}.
\end{equation}
Let $\gamma_i$ be the portion of the boundary curve from $p_{i-1/2}$ to $p_{i+1/2}$, inside $U_i$.

From Lemma \ref{lem:nablam} it follows that:
\begin{equation}
\pt_{\nabla}(\gamma_i): m_i \mapsto m_i,
\end{equation}
and then, since parallel transport along $\gamma_i$, is an isometry from the metric $\sigma$ at $p_{i-1/2}$ (which is equal to $\rho_{i-1/2}$) to the metric $\sigma$ at $p_{i+1/2}$ (which is equal to $\rho_{i+1/2}$), it follows that:
\begin{equation}
\pt_{\nabla}(\gamma_i): n_i^- \mapsto n_i^+.
\end{equation}
The matrix of the identity from the basis $(m_i, n_i^+)$ to the basis $(m_{i+1}, n_{i+1}^-)$ (both of which are orthonormal oriented for $\rho_{i+1/2}$) is:
\begin{equation}
\begin{bmatrix}
\cos -\theta_{i+1/2} & - \sin -\theta_{i+1/2}\\
\sin -\theta_{i+1/2}  & \cos -\theta_{i+1/2} 
\end{bmatrix}.
\end{equation}

We write:
\begin{equation}
\pt_{\nabla}(\partial T)  =  \pt_{\nabla}(\gamma_{I-1}) \circ \ldots \circ \pt_{\nabla}(\gamma_0).
\end{equation}
Expressed in the basis $(m_0, n_0^-)$, attached to the point  $p_{-1/2}$, the right hand side evaluates to:
\begin{equation}
\begin{bmatrix}
\cos -\theta_{I-1/2} & - \sin -\theta_{I- 1/2}\\
\sin -\theta_{I- 1/2}  & \cos -\theta_{I-1/2} 
\end{bmatrix}
\ldots 
\begin{bmatrix}
\cos -\theta_{1/2} & - \sin -\theta_{1/2}\\
\sin -\theta_{1/2}  & \cos -\theta_{1/2} 
\end{bmatrix} = \exp (- (\sum_i \theta_{i+ 1/2} ) J ).
\end{equation}

Since the expression for the holonomy is the same in the basis $(e_1(p_{-1/2}), e_2(p_{-1/2}) )$, the claimed identity follows.
\end{proof}

We are now ready to conclude:
\begin{proposition} \label{prop:intkappa} We have:
\begin{equation}
\int_\bbE \kappa \mu = d.
\end{equation}
\end{proposition}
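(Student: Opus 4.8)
The plan is to evaluate $\hol_A(\partial T)$ in the two available ways and then to remove a $2\pi$--ambiguity. Fix a domain $T \supseteq B_g(0,r)$. By the Corollary, $\kappa$ is supported in $B_g(0,r) \subseteq T$, so $\int_T \kappa\mu = \int_\bbE \kappa\mu$; hence Proposition~\ref{prop:holone} gives $\hol_A(\partial T) = \exp((\int_\bbE \kappa\mu)J)$, while Proposition~\ref{prop:holtwo} gives $\hol_A(\partial T) = \exp(-(\sum_i \theta_{i+1/2})J)$. Since $t \mapsto \exp(tJ)$ is the rotation group $\SO(2)$ with kernel $2\pi\bbZ$, these two expressions force
\begin{equation}
\int_\bbE \kappa\mu + \sum_i \theta_{i+1/2} \in 2\pi \bbZ,
\end{equation}
that is $\int_\bbE \kappa\mu = d + 2\pi(k-1)$ for some integer $k$, where $d = 2\pi - \sum_i \theta_{i+1/2}$.

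The remaining task — and the only real difficulty — is to show $k = 1$. This cannot be settled from the holonomy alone: depending on the number of sectors, $d$ ranges over an interval longer than $2\pi$, so the congruence above genuinely leaves several candidate values. To pin down $k$ I would deform the metric to the flat one and invoke connectedness. Keeping the half-lines (hence the sectors and their $g$-angles $\alpha_{i+1/2}$) fixed, set $\rho^{(s)} = (1-s)\rho + s\,g$ for $s \in [0,1]$. Convex combinations of positive definite metrics are positive definite, and the compatibility of the pullbacks to the half-lines is an affine condition holding at $s=0$ and at $s=1$, hence throughout; so each $\rho^{(s)}$ is a legitimate Regge metric of the same sectored form.

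For every $s$ the constructions of this section apply verbatim, producing smoothed metrics $\sigma^{(s)} = \phi \ast \rho^{(s)}$, curvatures $\kappa^{(s)}$, angles $\theta^{(s)}_{i+1/2}$, and an integer $k(s)$ determined by $2\pi k(s) = \int_\bbE \kappa^{(s)}\mu^{(s)} + \sum_i \theta^{(s)}_{i+1/2}$. The point that makes the argument work is that the radius $r = \max_i 1/\cos(\pi/2 - \alpha_{i+1/2}/2)$ depends only on the fixed $g$-angles, so by the Corollary every $\kappa^{(s)}$ is supported in the one fixed ball $B_g(0,r)$. Consequently $\int_\bbE \kappa^{(s)}\mu^{(s)} = \int_{B_g(0,r)} \kappa^{(s)}\mu^{(s)}$ is the integral over a fixed compact set of an integrand depending continuously on $s$ — here one uses that $\sigma^{(s)}$ depends smoothly on $s$ and stays uniformly positive definite, so its Levi--Civita curvature does too — and the $\theta^{(s)}_{i+1/2}$ are plainly continuous in $s$. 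Thus $k$ is a continuous $\bbZ$-valued function on $[0,1]$, hence constant.

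It then suffices to compute one value. At $s = 1$ the metric is globally the flat $g$: then $\kappa^{(1)} = 0$, so $\int_\bbE \kappa^{(1)}\mu^{(1)} = 0$, while $\theta^{(1)}_{i+1/2} = \alpha_{i+1/2}$ and $\sum_i \alpha_{i+1/2} = 2\pi$, these being the angles of a full turn about the origin; hence $k(1) = 1$. Therefore $k \equiv 1$ and $\int_\bbE \kappa\mu = d$, as claimed. I expect the main obstacle to be precisely this removal of the $2\pi$--ambiguity; an appealing alternative, matching the paper's remark about reproving Gauss--Bonnet, would be to track parallel transport as a path in $\SO(2)$ lifted to $\bbR$, identify its real endpoint with $\int_\bbE \kappa\mu$ through the abelian transport equation, and evaluate it geometrically, the extra $2\pi$ then appearing as the winding number of $\partial T$ about the origin.
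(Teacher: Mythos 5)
Your proposal is correct and follows essentially the same route as the paper: deduce the congruence $\int_\bbE \kappa\mu + \sum_i \theta_{i+1/2} \in 2\pi\bbZ$ from Propositions \ref{prop:holone} and \ref{prop:holtwo}, then remove the $2\pi$-ambiguity by linearly interpolating between $\rho$ and the flat metric $g$ and using continuity together with discreteness of the values, evaluating at the flat endpoint. Your write-up in fact supplies details the paper leaves implicit (that each interpolant is a legitimate Regge metric, and that the fixed support ball $B_g(0,r)$ guarantees continuity of the integral in the deformation parameter), but the underlying argument is the same.
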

\begin{proof}
From Propositions \ref{prop:holone} and \ref{prop:holtwo} we deduce:
\begin{equation}\label{eq:almost}
\int_\bbE \kappa \mu +  \sum_i \theta_{i+1/2} \in 2\pi \bbZ.
\end{equation}
Next we consider the following one-parameter family of Regge metrics:
\begin{equation}
[0,1] \ni s \mapsto \rho(s) = s\rho + (1-s)g.
\end{equation}
The left hand side in (\ref{eq:almost}), evaluated with $\rho$ replaced by $\rho(s)$, varies continuously as a function of $s$, and takes discrete values, so must be constant. Moreover at $s=0$ one obtains $2\pi$. Therefore the value at $s=1$ is also $2\pi$.
\end{proof}

We now return to the family of smoothed metrics $\sigma_\epsilon$ defined by (\ref{eq:smoothmet}). We let $\kappa_\epsilon$ and $\mu_\epsilon$ denote their respective scalar curvatures and volume forms.
\begin{proposition}
We have:
\begin{equation}
\kappa_\epsilon \mu_\epsilon \to d \delta,
\end{equation}
in the sense that for any continuous function $\psi$:
\begin{equation}
\lim_{\epsilon \to 0} \int \psi \kappa_\epsilon \mu_\epsilon = d \psi(0).
\end{equation}
\end{proposition}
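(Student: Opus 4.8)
The plan is to reduce everything to the unscaled metric $\sigma = \sigma_1$ by a scaling argument, exploiting that the Regge metric $\rho$ is invariant under dilations: being constant on each sector, it satisfies $\rho(\lambda x) = \rho(x)$ for every $\lambda > 0$. First I would establish the key identity
\begin{equation}
\sigma_\epsilon(x) = \sigma(x/\epsilon),
\end{equation}
understood as an equality of matrix-valued functions. This follows directly from the convolution definition (\ref{eq:smoothmet}): writing $\sigma_\epsilon(x) = \int \phi_\epsilon(x-y)\rho(y)\rmd y$, substituting $y = \epsilon z$, and using $\phi_\epsilon(x) = \epsilon^{-2}\phi(\epsilon^{-1}x)$ together with the scale-invariance $\rho(\epsilon z) = \rho(z)$, one obtains $\sigma_\epsilon(x) = \int \phi(\epsilon^{-1}x - z)\rho(z)\rmd z = \sigma(\epsilon^{-1}x)$.

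Next I would translate this into the scaling behaviour of the curvature and the volume form. Denoting by $\Phi_\epsilon$ the dilation $x \mapsto x/\epsilon$, the identity above reads $\sigma_\epsilon = \epsilon^2\,\Phi_\epsilon^*\sigma$, since $(\Phi_\epsilon^*\sigma)_x(u,v) = \sigma_{x/\epsilon}(\epsilon^{-1}u,\epsilon^{-1}v) = \epsilon^{-2}\sigma_\epsilon(x)(u,v)$. Using that scalar curvature is invariant under pullback by diffeomorphisms and scales as $\kappa(c\sigma) = c^{-1}\kappa(\sigma)$ under multiplication by a positive constant $c$, I get
\begin{equation}
\kappa_\epsilon(x) = \epsilon^{-2}\kappa(x/\epsilon).
\end{equation}
Since $\det\sigma_\epsilon(x) = \det\sigma(x/\epsilon)$, the volume form is $\mu_\epsilon = \sqrt{\det\sigma(x/\epsilon)}\,\rmd x$. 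Combining the two, the measure $\kappa_\epsilon\mu_\epsilon$ is exactly the image of $\kappa\mu$ under the dilation $x \mapsto \epsilon x$.

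Then I would compute the pairing directly. Inserting the two scaling formulas and changing variables $y = x/\epsilon$ (with $\rmd x = \epsilon^2\,\rmd y$), the factors of $\epsilon$ cancel and
\begin{equation}
\int \psi\,\kappa_\epsilon\mu_\epsilon = \int \psi(\epsilon y)\,\kappa(y)\,\mu(y).
\end{equation}
Here $\kappa$ has compact support by the Corollary (it is supported in $B_g(0,r)$), so the integral is over a fixed compact set independent of $\epsilon$. Passing to the limit $\epsilon \to 0$ is then routine: $\psi$ is continuous, so $\psi(\epsilon y) \to \psi(0)$ uniformly on that compact set, and dominated convergence yields $\lim_{\epsilon\to 0}\int\psi\,\kappa_\epsilon\mu_\epsilon = \psi(0)\int_\bbE\kappa\mu$. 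Finally I would invoke Proposition \ref{prop:intkappa}, which identifies $\int_\bbE\kappa\mu = d$, to conclude that $\lim_{\epsilon\to0}\int\psi\,\kappa_\epsilon\mu_\epsilon = d\,\psi(0)$.

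The only genuinely delicate point is the second step, namely pinning down the exact scaling exponent of the curvature: getting both the diffeomorphism-covariance and the constant-rescaling factor right, so that the $\epsilon^{-2}$ appearing in $\kappa_\epsilon$ precisely cancels the Jacobian $\epsilon^2$ of the change of variables. Once that bookkeeping is correct, the compact support of $\kappa$ (already secured by the Corollary) together with the continuity of $\psi$ makes the limit immediate, and the value $d$ is handed to us by Proposition \ref{prop:intkappa}.
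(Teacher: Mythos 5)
Your proof is correct and follows essentially the same route as the paper: both hinge on the scaling identity $\sigma_\epsilon = \epsilon^2 \Phi_\epsilon^\star \sigma$ (coming from dilation-invariance of $\rho$ and the convolution definition), which gives $\kappa_\epsilon \mu_\epsilon = \Phi_\epsilon^\star(\kappa\mu)$, after which the limit follows from the compact support of $\kappa$ and Proposition \ref{prop:intkappa}. Your write-up actually spells out the final limiting step (change of variables, uniform convergence of $\psi(\epsilon y)$ on the support of $\kappa$) that the paper compresses into ``the convergence follows.''
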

\begin{proof}
Let $\Phi_\epsilon:\bbE \to \bbE$ be the scaling map:
\begin{equation}
\Phi_\epsilon(x) = \epsilon^{-1} x.
\end{equation}
Since:
\begin{equation}
\rho = \epsilon^2 \Phi_{\epsilon}^\star \rho,
\end{equation}
we get:
\begin{equation}
\sigma_\epsilon = \epsilon^2 \Phi_{\epsilon}^\star \sigma.
\end{equation}
It follows that:
\begin{align}
\kappa_\epsilon &= \epsilon^{-2}\Phi_{\epsilon}^\star \kappa,\\
\mu_\epsilon & = \epsilon^2 \Phi_{\epsilon}^\star \mu.
\end{align}
So that:
\begin{equation}
\kappa_\epsilon \mu_\epsilon = \Phi_{\epsilon}^\star (\kappa \mu).
\end{equation}

Based on this identity, the convergence follows.
\end{proof}

\subsection{Justification in higher dimensions}

We let $\bbE$ denote some Euclidean space of dimension at least three. Its metric is denoted $g$ and is used to define smoothing by convolution.

Let $\bbF$ be a subspace of codimension two. We devide $\bbE$ into a finite number $I$ of sectors around $\bbF$, by considering half-hyperplanes emanating from $\bbF$. These half-hyperplanes are indexed as before by $i \in \bbZ/I\bbZ$, and the sector between $i$ and $i+1$ is indexed by $i + 1/2$. 

We let $\rho$ be a metric on $\bbE$ defined as follows. In each sector it is constant and positive definite, with value denoted $\rho_{i + 1/2}$ in sector $i + 1/2$. Its pullback to the half-hyperplanes should be well defined, in the sense that the pullback by the canonical injection is the same from both sides. In other words $\rho$ is continuous across interfaces when applied to two tangential vectors. This is an analogue of a Regge metric in a simplified setting, where one just looks at what happens around a single hinge.

Let $m_i$ denote the vector in the half-hyperplane $i$ which has unit length and is orthogonal to $\bbF$, with respect to $\rho$. We choose the orientation that makes it point into the half-hyperplane.

 Let $\theta_{i+1/2}$ be the angle between the vectors $m_i$ and $m_{i+1}$ with respect to the metric $\rho_{i+1/2}$. The deficit angle, along the hinge $\bbF$, is defined to be:
\begin{equation}\label{eq:deficit}
d= 2\pi - \sum_i \theta_{i+1/2}.
\end{equation}

As before we choose $\phi$, a smooth function on $\bbE$ with compact support in the unit ball and with integral $1$, with respect to $g$. For $\epsilon >0$ define the scaling:
\begin{equation}
\phi_\epsilon (x)= \epsilon^{- \dim \bbE} \phi(\epsilon^{-1} x),
\end{equation}
where the factor in front is chosen to preserve the value of the integral. Define the smoothed Regge metrics $\sigma_\epsilon$ by the following convolution product, computed with respect to $g$:
\begin{equation}
\sigma_\epsilon = \phi_\epsilon \ast \rho.
\end{equation}
We let $\mu_\epsilon$ denote the volume form attached to $\sigma_\epsilon$ and $\kappa_\epsilon$ denote its scalar curvature.

We shall show that, for any continuous compactly supported function $\psi$ on $\bbE$:
\begin{equation}
\int_{\bbE} \psi \kappa_\epsilon \mu_\epsilon \to \int_{\bbF} d \psi.
\end{equation}
On the right hand side we integrate $\psi$ on $\bbF$ with respect to the metric induced by $\rho$ on $\bbF$, which is well defined. In other words we show that the densitized scalar curvature $\kappa_\epsilon \mu_\epsilon$ converges in the sense of measures to a certain measure supported on $\bbF$, given by the deficit angle.

To obtain this, one would like to apply the previous type of arguments to some two-dimensional space transverse to $\bbF$. However if one just chooses an arbitrary transverse plane the expression of scalar curvature induced in it will be difficult to relate to the scalar curvature on $\bbE$ and the deficit angle. Another idea would be to fix $x \in \bbF$ and take, inside sector $i+ 1/2$, the positive  cone, consisting of points $x + \bbR_+ m_i + \bbR_+ m_{i+1}$. But the union will be a piecewise linear cone, so that we would be on shaky grounds for doing calculus involving non-linear expressions with second order derivatives.

Our solution to this problem is an appeal to Frobenius' theorem concerning integrability of subbundles, as can be found for instance in \cite{Tay96I} page 40.

We fix an $\epsilon$ until further notice. For each point $x \in \bbE$ we let $\bbD_\epsilon(x)$ be the two-dimensional space orthogonal to $\bbF$ with respect to $\sigma_\epsilon[x]$:
\begin{equation}
\bbD_\epsilon(x) = \{ e \in \bbE \ : \ \forall f \in \bbF \quad \sigma_\epsilon[x](e,f) = 0 \}
\end{equation}
We shall prove that this subbundle is integrable, in the sense that through each point $x \in \bbE$ there passes a two-dimensional smooth manifold $S_\epsilon (x)$, having $\bbD_\epsilon(y)$ as tangent space at each $y \in S_\epsilon (x) $.

Some lemmas:
\begin{lemma}
Choose $f \in \bbF$. Then the (smooth) one-form $x \mapsto \sigma_\epsilon [x](f, \cdot)$ is closed.
\end{lemma}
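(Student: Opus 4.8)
The plan is to deduce the closedness of $\omega_f := \sigma_\epsilon[\cdot](f,\cdot)$ from the fact that convolution commutes with the exterior derivative, thereby reducing the whole question to a statement about the \emph{unsmoothed} metric $\rho$. Since $f \in \bbF$ is a fixed constant vector and the basis of $\bbE$ is constant, convolution acts componentwise, so
\begin{equation}
\omega_f = \phi_\epsilon \ast \beta, \quad \textrm{where} \quad \beta := \rho[\cdot](f, \cdot).
\end{equation}
Here $\beta$ is a bounded, measurable one-form on $\bbE$ that is constant on each open sector. Because $\rmd(\phi_\epsilon \ast \beta) = \phi_\epsilon \ast \rmd \beta$ (the distributional exterior derivative commutes with convolution against the smooth, compactly supported $\phi_\epsilon$), it suffices to prove that $\beta$ is closed in the sense of distributions.

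First I would treat $\beta$ sector by sector: in each sector $\rho$ is constant, hence $\beta$ is a constant one-form there and $\rmd \beta = 0$ classically on the open sectors. The distributional derivative can therefore carry only a singular part supported on the interfaces, that is, on the half-hyperplanes separating adjacent sectors. The common edge $\bbF$ has codimension two and so carries no codimension-one mass, contributing nothing to the derivative of a bounded form. Across a single interface $\Pi_i$, whose tangent space is $\bbF \oplus \bbR m_i$, the jump formula for exterior derivatives states that the singular part of $\rmd \beta$ equals the conormal of $\Pi_i$ wedged with $\jump{\beta}$, times the surface measure; this vanishes precisely when the pullback (the tangential trace) of $\beta$ to $\Pi_i$ is continuous across $\Pi_i$.

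The decisive point is then exactly the well-definedness hypothesis on $\rho$. For any vector $t$ tangent to $\Pi_i$ one has $\beta(t) = \rho(f, t)$ with both $f \in \bbF \subseteq \Pi_i$ and $t$ tangent to $\Pi_i$; by assumption $\rho$ is continuous across the interface when evaluated on two such tangential vectors, so $\jump{\beta(t)} = 0$ for every tangent $t$. Hence the tangential trace of $\beta$ agrees from both sides, the singular part of $\rmd \beta$ vanishes on every interface, and $\rmd \beta = 0$ as a distribution. Convolving gives $\rmd \omega_f = \phi_\epsilon \ast \rmd \beta = 0$, as claimed.

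I expect the only delicate step to be the justification of this transmission computation: one must argue that a bounded piecewise-constant form has distributional exterior derivative concentrated on the codimension-one interfaces alone, and that closedness across each interface is governed solely by continuity of the tangential trace. Everything else is just the translation of the geometric hypothesis into this transmission condition. As a consistency check, one may note that for the smoothed metric the identity $\rmd \omega_f(X,Y) = 2\,\sigma_\epsilon(\nabla_X f, Y)$ holds for constant vector fields $X,Y$ (using that $\rho$, hence $\sigma_\epsilon$, is invariant under translations along $\bbF$, whence $\partial_f \sigma_\epsilon = 0$), so the lemma is equivalent to the assertion that every $f \in \bbF$ is parallel for the Levi-Civita connection of $\sigma_\epsilon$, the higher-dimensional analogue of Lemma \ref{lem:nablam}.
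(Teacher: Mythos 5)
Your proof is correct and takes essentially the same route as the paper's: reduce to the unsmoothed one-form $\beta = \rho(f,\cdot)$ via $\rmd(\phi_\epsilon \ast \beta) = \phi_\epsilon \ast \rmd\beta$, then note that $\beta$ is constant in each sector with continuous tangential trace across each interface, so its distributional exterior derivative vanishes. The paper compresses this into two sentences; you have simply made the jump/transmission argument explicit (and your closing consistency check relating the lemma to $\nabla^\epsilon f = 0$ matches the paper's subsequent Lemma~\ref{lem:tor}).
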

\begin{proof}
If we look at the one-form $\rho(f, \cdot)$, we notice that it is constant in each sector and continuous at the hyperplane interfaces on tangential vectors. Therefore its exterior derivative in the sense of distributions is $0$.

Since $\sigma_\epsilon(f, \cdot) $ is obtained from $\rho(f, \cdot)$ by smoothing by convolution, it follows that it is also closed.
\end{proof}

Let $\nabla^\epsilon$ denote the Levi-Civita connection of the metric $\sigma_\epsilon$.

\begin{lemma} \label{lem:tor} For any $f\in \bbF$, considered as a translation invariant vectorfield on $\bbE$, $\nabla^\epsilon f = 0$.
\end{lemma}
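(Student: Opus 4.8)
The plan is to compute $\nabla^\epsilon f$ directly from the Koszul formula, exploiting that $f$ and every coordinate direction can be taken as translation invariant (constant) vector fields on $\bbE$, so that all Lie brackets drop out. Concretely, for constant vector fields $X$, $Z$ the Koszul formula reduces, exactly as in the proof of Lemma \ref{lem:nablam}, to
\begin{equation}
2\sigma_\epsilon(\nabla^\epsilon_X f, Z) = \partial_X \sigma_\epsilon(f, Z) + \partial_f \sigma_\epsilon(X, Z) - \partial_Z \sigma_\epsilon(f, X).
\end{equation}
It therefore suffices to show that the right-hand side vanishes for all constant $X$, $Z$; non-degeneracy of $\sigma_\epsilon$ then forces $\nabla^\epsilon_X f = 0$, and since $X$ is arbitrary, $\nabla^\epsilon f = 0$.

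I would dispatch the three terms in two groups. For the first and third terms I invoke the preceding lemma: the one-form $x \mapsto \sigma_\epsilon[x](f, \cdot)$ is closed. Evaluated on the constant fields $X$ and $Z$, closedness says precisely $\partial_X \sigma_\epsilon(f, Z) = \partial_Z \sigma_\epsilon(f, X)$, so, using the symmetry of $\sigma_\epsilon$, the first and third terms cancel. For the middle term I would use that $\sigma_\epsilon$ is invariant under translation in the $\bbF$ directions: the sector decomposition is unchanged by translation along $\bbF$, since the half-hyperplanes emanate from $\bbF$, so $\rho$ is $\bbF$-translation invariant, and since convolution commutes with translations this invariance passes to $\sigma_\epsilon = \phi_\epsilon \ast \rho$. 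Hence $\partial_f \sigma_\epsilon(X, Z) = 0$ for $f \in \bbF$, killing the middle term.

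With all three terms gone the identity $\sigma_\epsilon(\nabla^\epsilon_X f, Z) = 0$ holds for every constant $Z$, and the conclusion follows. The only genuine content, and thus the step I would be most careful about, is the translation invariance feeding the middle term: one must check that the derivative $\partial_f$ of the smoothed metric along a direction $f \in \bbF$ really vanishes, which rests on the geometric fact that the hinge $\bbF$ is a translation symmetry of the whole sector configuration and hence of $\rho$. The closedness input and the vanishing of the brackets are then routine, so the main obstacle here is conceptual rather than computational.
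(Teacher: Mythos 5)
Your proof is correct and follows essentially the same route as the paper's: the Koszul formula with constant (translation invariant) vector fields, $\bbF$-translation invariance of $\sigma_\epsilon$ to kill the term $\partial_f \sigma_\epsilon(X,Z)$, and the closedness lemma for the one-form $\sigma_\epsilon(f,\cdot)$ to dispose of the remaining two terms, which is exactly the paper's identification of those terms with $(\rmd\, \sigma_\epsilon(f,\cdot))(X,Z)$. Your explicit justification of why $\rho$ (and hence $\sigma_\epsilon$) is invariant under translations along $\bbF$ is a welcome detail that the paper leaves implicit.
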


\begin{proof}
Let $a, b$ denote two translation invariant vector fields on $\bbE$. We write:
\begin{align}
2 \sigma_\epsilon( \nabla^\epsilon_a f, b) & = \partial_a \sigma^\epsilon(f, b) - \partial_b \sigma^\epsilon(f, a),\\
& =  (\rmd \sigma^\epsilon(f, \cdot ))(a, b),\\
&  = 0.
\end{align}
We used first the Koszul formula for the Levi-Civita connection (\cite{Tay96I} equation (11.22) page 48). We noticed that commutators vanish and also that $\sigma^\epsilon(a, b)$ is invariant under translation by $f$, so that the term $\partial_f \sigma^\epsilon(a, b)$ vanishes. Secondly, we used an identity for the exterior derivative of one-forms (\cite{Tay96I} equation (13.55) page 69) which lets us apply the previous Lemma.

Since this holds pointwise, for all $a,b \in \bbE$, the lemma follows.

\end{proof}

Let $e' \in \bbE$ denote a vector not in $\bbF$ and $e''\in \bbE$ a vector not in $\bbF + \bbR e'$. We deduce a basis for $\bbD(x)$ as follows:
\begin{align}
e_1(x) & = e' - \calP_{\bbF}[x] e', \label{eq:eone} \\
e_2(x) & = e'' - \calP_{\bbF + \bbR e'}[x] e'',\label{eq:etwo}
\end{align}
where for instance $\calP_{\bbF}[x]$ denotes the orthogonal projection onto $\bbF$ with respect to $\sigma_\epsilon[x]$.

Then $(e_1(x), e_2(x))$ is a basis for $\bbD_\epsilon(x)$ and it is invariant with respect to translations along vectors in $\bbF$.
\begin{proposition} \label{prop:com} The commutator $[e_1, e_2]$ is in $\bbD$.
\end{proposition}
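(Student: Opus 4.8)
The plan is to verify involutivity directly against the defining orthogonality condition for $\bbD$. Recall that a vector $v$ lies in $\bbD_\epsilon(x)$ precisely when $\sigma_\epsilon[x](v, f) = 0$ for every $f \in \bbF$. So it suffices to fix an arbitrary $f \in \bbF$, regarded as a translation invariant vectorfield, and to show that the scalar function $x \mapsto \sigma_\epsilon[x]([e_1, e_2], f)$ vanishes identically.

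The key is to trade the Lie bracket for covariant derivatives. Since $\nabla^\epsilon$ is the Levi-Civita connection, it is torsion free, so $[e_1, e_2] = \nabla^\epsilon_{e_1} e_2 - \nabla^\epsilon_{e_2} e_1$. I would then pair each term against $f$ and invoke metric compatibility of $\nabla^\epsilon$, writing for instance
\[
\sigma_\epsilon(\nabla^\epsilon_{e_1} e_2, f) = \partial_{e_1}\big( \sigma_\epsilon(e_2, f)\big) - \sigma_\epsilon(e_2, \nabla^\epsilon_{e_1} f).
\]
Two facts now collapse the right hand side. First, Lemma \ref{lem:tor} gives $\nabla^\epsilon f = 0$, so the last term disappears. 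Second, because $e_2(x) \in \bbD_\epsilon(x)$ at every point by construction, the function $\sigma_\epsilon(e_2, f)$ is identically zero, hence so is its derivative in the direction $e_1$. Therefore $\sigma_\epsilon(\nabla^\epsilon_{e_1} e_2, f) = 0$, and symmetrically $\sigma_\epsilon(\nabla^\epsilon_{e_2} e_1, f) = 0$. Subtracting yields $\sigma_\epsilon([e_1, e_2], f) = 0$ for all $f \in \bbF$, which is exactly the assertion $[e_1, e_2] \in \bbD$.

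I do not expect a serious obstacle here: the content is carried entirely by Lemma \ref{lem:tor}, which makes the directions in $\bbF$ parallel and thus lets them drop out of the metric-compatibility identity. The only points requiring a word of care are that $e_1, e_2$ are genuinely smooth vectorfields (which holds since the projections $\calP_{\bbF}[x]$ and $\calP_{\bbF + \bbR e'}[x]$ depend smoothly on $x$ through $\sigma_\epsilon$) and that $\sigma_\epsilon(e_j, f)$ vanishes as a global function rather than merely at a single point, so that its directional derivative indeed vanishes. With involutivity established, Frobenius' theorem then supplies the integral manifolds $S_\epsilon(x)$.
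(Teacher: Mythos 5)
Your proof is correct and is essentially the paper's own argument: both convert the bracket via torsion freeness, apply metric compatibility against a fixed $f \in \bbF$, and then kill the remaining terms using Lemma \ref{lem:tor} together with the identical vanishing of $\sigma_\epsilon(e_j, f)$. The only difference is that you spell out explicitly the step (dropping the directional derivative of the identically zero function $\sigma_\epsilon(e_j,f)$) that the paper leaves implicit in its metric-compatibility line.
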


\begin{proof} Let $f$ be a vector in $\bbF$.
We write:
\begin{align}
\sigma_\epsilon( [e_1, e_2], f ) & = \sigma_\epsilon (\nabla^\epsilon_{e_1} e_2 - \nabla^\epsilon_{e_2} e_1, f ),\\
& = - \sigma_\epsilon( \nabla^\epsilon_{e_1} f, e_2 ) +  \sigma_\epsilon( \nabla^\epsilon_{e_2} f, e_1 ) ,\\
& = 0.
\end{align}
We used first torsion freeness, then metric compatibility of the Levi-Civita connection and finally the previous Lemma.
\end{proof}

Summing up, the situation is as follows. By Proposition \ref{prop:com} the subbundle $\bbD$ is integrable and yields a foliation of $\bbE$ by surfaces $S_\epsilon(x)$ ($x \in \bbE$), whose tangent planes are orthogonal to $\bbF$ at every point, with respect to $\sigma_\epsilon$. The foliation is invariant under translation along vectors in $\bbF$, and for each $y \in \bbE$ there is a unique  $x \in \bbF$ such that $y \in S_\epsilon(x)$. By Lemma \ref{lem:tor} these two-dimensional submanifolds have zero extrinsic curvature. The scalar curvature of the smoothed metric $\sigma^\epsilon$ on $\bbE$ is therefore related to the scalar curvature of the submanifolds $S_\epsilon$ equipped with the induced metric, in a very simple way. See for instance \cite{Wal84} page 464, for the case where $S_\epsilon$ is a hypersurface, that is, when $\bbE$ is three-dimensional.

We have an analogue of Lemma \ref{lem:nablam}:

\begin{lemma}\label{lem:mtransp}
In the union of the sectors $i - 1/2$ and $i+ 1/2$, consider the open subset $U_i^\epsilon$ of points whose distance to the boundary is strictly larger than $\epsilon$.

On $U_i^\epsilon$ we have $\nabla^\epsilon m_i = 0$.
\end{lemma}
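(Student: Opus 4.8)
The plan is to imitate the proof of Lemma~\ref{lem:nablam} verbatim in structure, replacing the scalar curvature computation by the Koszul formula applied to the \emph{constant} vector field $m_i$, tested against constant (translation-invariant) vector fields $X,Y$ on $U_i^\epsilon$:
\[
2 \sigma_\epsilon ( \nabla^\epsilon_X m_i , Y) = \partial_X \sigma_\epsilon(m_i, Y) + \partial_{m_i} \sigma_\epsilon(X,Y) - \partial_Y \sigma_\epsilon(m_i, X).
\]
Since $\sigma_\epsilon$ is nondegenerate, it suffices to show the right-hand side vanishes for $Y$ ranging over a spanning set, namely $\{ m_i, n, f \}$, where $f$ ranges over $\bbF$ and $n$ is one extra direction transverse to the hyperplane $\hat H_i$ spanned by the half-hyperplane $i$ together with $\bbF$. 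The whole proof then reduces to three structural facts about $\sigma_\epsilon$ on $U_i^\epsilon$: (a) invariance of $\sigma_\epsilon$ under translation in the $m_i$ direction; (b) invariance of $\sigma_\epsilon$ under translation along $\bbF$; and (c) constancy of the two scalar fields $\sigma_\epsilon(m_i, m_i)$ and $\sigma_\epsilon(m_i, f)$.

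The geometric heart, and the step I expect to be the main obstacle, is fact (a). The point is that for $x \in U_i^\epsilon$ the condition $\dist(x, \partial) > \epsilon$ — where the boundary of the union of sectors $i-1/2$ and $i+1/2$ comprises the half-hyperplanes $i-1$, $i+1$ and the edge $\bbF$ — guarantees that the ball $x - \supp \phi_\epsilon$ meets neither the neighbouring interfaces nor $\bbF$. Hence on this ball $\rho$ is locally the \emph{two-valued} metric ($\rho_{i-1/2}$ and $\rho_{i+1/2}$) separated by a flat piece of the hyperplane $\hat H_i$, and this local picture is invariant under any translation parallel to $\hat H_i$. Because $m_i$ lies in $\hat H_i$, translation by $m_i$ leaves the convolution $\sigma_\epsilon = \phi_\epsilon \ast \rho$ unchanged on $U_i^\epsilon$, giving $\partial_{m_i} \sigma_\epsilon = 0$ there. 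Fact (b) is immediate and global, since $\rho$ itself is $\bbF$-translation invariant.

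For fact (c) I would use that both $m_i$ and any $f \in \bbF$ are tangent to the half-hyperplane $i$, so that the well-definedness of the pullback of $\rho$ to $\hat H_i$ forces $\rho_{i-1/2}(m_i, m_i) = \rho_{i+1/2}(m_i, m_i) = 1$ and $\rho_{i-1/2}(m_i, f) = \rho_{i+1/2}(m_i, f) = 0$ (the latter by the definition of $m_i$ as $\rho$-orthogonal to $\bbF$ of unit length). Since the integrand in $\sigma_\epsilon(m_i, m_i)(x) = \int \phi_\epsilon(y)\, \rho(x-y)(m_i,m_i)\, \rmd y$ is therefore identically $1$ on the union of the two sectors, and likewise the one for $\sigma_\epsilon(m_i, f)$ is identically $0$, both scalar fields are constant on $U_i^\epsilon$. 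Feeding (a), (b), (c) into the Koszul formula kills every term: the middle term vanishes by (a); for $X = m_i$ both remaining terms vanish by (a) and (c); for $X = f$ both vanish by (b) and constancy of $\sigma_\epsilon(m_i,f)$; and for $X = n$ the only possibly surviving term, $\partial_n \sigma_\epsilon(m_i, Y)$, vanishes against each $Y$ by (c) and (b), while the $-\partial_Y \sigma_\epsilon(m_i, n)$ term is handled symmetrically. Thus $\sigma_\epsilon(\nabla^\epsilon_X m_i, Y) = 0$ for all $X, Y$, and nondegeneracy yields $\nabla^\epsilon m_i = 0$ on $U_i^\epsilon$.
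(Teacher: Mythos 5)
Your proof is correct, and its skeleton coincides with the paper's: both start from the Koszul formula for constant vector fields and both kill the middle term $\partial_{m_i}\sigma_\epsilon(X,Y)$ by exactly your fact (a), the $m_i$-translation invariance of $\sigma_\epsilon$ on $U_i^\epsilon$ inherited from $\rho$ through the convolution. Where you diverge is in the treatment of the remaining antisymmetric part $\partial_X \sigma_\epsilon(m_i,Y) - \partial_Y \sigma_\epsilon(m_i,X)$. The paper recognizes this as $(\rmd\,\sigma_\epsilon(m_i,\cdot))(X,Y)$ and invokes a distributional argument: the one-form $\rho(m_i,\cdot)$ is piecewise constant and tangentially continuous across the interface $i$, hence closed in the sense of distributions on the union of the two sectors, and closedness survives convolution, so the whole expression vanishes at once for arbitrary $X,Y$. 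You instead run a basis case analysis over $\{m_i\}\cup\bbF\cup\{n\}$, using $\bbF$-invariance (b) and the constancy (c) of the two components $\sigma_\epsilon(m_i,m_i)\equiv 1$ and $\sigma_\epsilon(m_i,f)\equiv 0$ on $U_i^\epsilon$; the point that makes this work is that the only component that genuinely jumps across the interface, namely $\sigma_\epsilon(m_i,n)$, is in every case hit only by derivatives in the invariant directions $m_i$ or $f$, where (a) and (b) apply. Both routes ultimately encode the same geometric input --- the well-definedness of the pullback of $\rho$ to the interface --- but the paper encodes it once, distributionally, in a closedness lemma that is reused verbatim from the earlier lemma on $f\in\bbF$, while you encode it pointwise in the constancy of specific scalar components. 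Your version is more elementary (no distributional exterior derivative, no commutation of $\rmd$ with convolution), at the price of being tailored to this particular frame and requiring the careful bookkeeping that no jumping component is ever differentiated transversally; the paper's version is coordinate-free and generalizes more readily, since it never needs any component of $\rho(m_i,\cdot)$ to be constant, only its tangential trace to be continuous.
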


\begin{proof}
 For this proof we fix $\epsilon$, and write  $\sigma = \sigma_\epsilon$ and $\nabla = \nabla^\epsilon$. We follow quite closely the proof of Lemma \ref{lem:tor}.

Let $a$ and $b$ be any two vectors in $\bbE$, considered as constant (translation invariant) vectorfields on $\bbE$.
We write:
\begin{equation}
2 \sigma (\nabla_a m_i, b ) = \partial_a \sigma (m_i, b) + \partial_{m_i} \sigma (a,b) - \partial_b \sigma(m_i, a).
\end{equation}

Consider the second term on the right hand side. We notice that  $\rho$ is invariant with respect to translations in direction $m_i$, as long as one stays inside the union of the two sectors $i-1/2$ and $i+1/2$. Therefore $\sigma$ also has this property as long as one stays inside $U_i^\epsilon$. Hence the second term vanishes.

Then, for the two remaining terms, we recognize:
\begin{equation}
\partial_a \sigma (m_i, b)  - \partial_b \sigma (m_i, a) = (\rmd \sigma(m_i, \cdot))(a,b).
\end{equation}
Since the one-form $\rho(m_i, \cdot)$ is constant in each of the two sectors  $i-1/2$ and $i + 1/2$, and continuous across the interface $i$, when evaluated on tangential vectors, we have that $\rmd \rho(m_i, \cdot) = 0$. Since $\sigma (m_i, \cdot)$ is deduced from $\rho(m_i, \cdot)$ by smoothing by convolution, it is also closed. 

We get:
\begin{equation}
\sigma (\nabla_a m_i, b ) = 0.
\end{equation}

Since this holds for all $a,b \in \bbE$, the proposition follows.
\end{proof}

We apply this to locate the curvature of the induced metrics on the manifolds $S_\epsilon$. For this discussion we fix one such surface.

\begin{proposition}\label{prop:curloc}
Inside the manifolds $S_\epsilon$, the curvature is located within distance $\calO(\epsilon)$ to the intersection of $S_\epsilon$ with the hinge $\bbF$.
\end{proposition}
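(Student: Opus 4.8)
The plan is to exhibit, on the part of each leaf $S_\epsilon$ lying outside an $\mathcal{O}(\epsilon)$-tube around $\bbF$, a parallel unit tangent vector field; its mere existence forces the intrinsic curvature of $S_\epsilon$ to vanish there, and the localization then follows from the same trigonometry used for the two-dimensional corollary. First I would verify that the candidate field $m_i$ is genuinely tangent to the foliation on $U_i^\epsilon$, i.e. that $m_i \in \bbD_\epsilon(x)$ for every $x \in U_i^\epsilon$. Since $m_i$ and any $f \in \bbF$ are both tangential to the interface $i$, well-definedness of the pullback gives $\rho_{i-1/2}(m_i,f) = \rho_{i+1/2}(m_i,f) = 0$, so $\rho(m_i,f)$ vanishes identically throughout the union of the sectors $i-1/2$ and $i+1/2$. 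For $x \in U_i^\epsilon$ the kernel $\phi_\epsilon(x-\cdot)$ is supported in that union, whence $\sigma_\epsilon[x](m_i,f)=0$; this is exactly $m_i \in \bbD_\epsilon(x)$. The same computation with $f$ replaced by $m_i$ yields $\sigma_\epsilon(m_i,m_i)=1$, so $m_i$ is a smooth unit tangent field on $S_\epsilon \cap U_i^\epsilon$.

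Next I would transfer the ambient parallelism to the leaf. By Lemma \ref{lem:mtransp} we have $\nabla^\epsilon m_i = 0$ on $U_i^\epsilon$. The leaves are totally geodesic: for tangent $X,Y \in \bbD_\epsilon$ and $f \in \bbF$, metric compatibility together with Lemma \ref{lem:tor} give $\sigma_\epsilon(\nabla^\epsilon_X Y, f) = -\sigma_\epsilon(Y,\nabla^\epsilon_X f) = 0$, so the second fundamental form vanishes and the Gauss formula reads $\nabla^\epsilon_X Y = \nabla^{S_\epsilon}_X Y$ on tangent fields. Hence $m_i$ is parallel for the induced Levi-Civita connection of $S_\epsilon$ on $S_\epsilon \cap U_i^\epsilon$. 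A two-dimensional Riemannian manifold carrying a nonvanishing parallel vector field is flat: from $R^{S_\epsilon}(X,Y)m_i = 0$ and the identity $R^{S_\epsilon}(X,Y)Z = \kappa(\langle Y,Z\rangle X - \langle X,Z\rangle Y)$ one reads off $\kappa = 0$. Thus the intrinsic curvature of $S_\epsilon$ vanishes on $S_\epsilon \cap U_i^\epsilon$, and therefore on $S_\epsilon \cap \bigcup_i U_i^\epsilon$.

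It remains to locate the complement. The entire configuration is invariant under translation along $\bbF$, and the half-hyperplanes all contain $\bbF$; projecting to $\bbE/\bbF$ turns the sectors into the planar sectors of the two-dimensional analysis and turns $\dist_g(\cdot,\bbF)$ into distance to the origin. The trigonometric estimate preceding the two-dimensional corollary, with the threshold $1$ replaced by $\epsilon$, then shows that $\bigcup_i U_i^\epsilon$ contains every point with $\dist_g(\cdot,\bbF) > \epsilon r$, where $r = \max_i 1/\cos(\pi/2 - \alpha_{i+1/2}/2)$ with the angles measured in the plane transverse to $\bbF$ with respect to $g$. Hence the curvature of $S_\epsilon$ can be nonzero only on $\{ y \in S_\epsilon : \dist_g(y,\bbF) \le \epsilon r \}$. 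Since $S_\epsilon \cap \bbF$ is the single point $x_\ast$ through which the leaf passes, and $S_\epsilon$ meets $\bbF$ orthogonally with the metrics $\sigma_\epsilon$ uniformly comparable to $g$ (the finitely many $\rho_{i+1/2}$ being fixed and positive definite), the ambient bound $\dist_g(y,\bbF)\le \epsilon r$ forces the intrinsic distance in $S_\epsilon$ from $y$ to $x_\ast$ to be $\mathcal{O}(\epsilon)$, which is the asserted localization.

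The main obstacle I anticipate is this last comparison: passing from the ambient tube $\{\dist_g(\cdot,\bbF)\le \epsilon r\}$ to an intrinsic $\mathcal{O}(\epsilon)$-neighborhood of the single point $x_\ast$ inside the possibly curved leaf $S_\epsilon$. One must control that the foot of $y \in S_\epsilon$ on $\bbF$ is comparable to $x_\ast$, i.e. that the transversality of $S_\epsilon$ to $\bbF$ is uniform in $\epsilon$; this is where the uniform ellipticity of $\sigma_\epsilon$ and the orthogonality $\bbD_\epsilon \perp \bbF$ must be used quantitatively rather than merely qualitatively.
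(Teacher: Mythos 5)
Your proposal is correct and follows essentially the same route as the paper: on $U_i^\epsilon$ the vector $m_i$ is $\sigma_\epsilon$-orthogonal to $\bbF$ (hence tangent to the leaf) and parallel by Lemma \ref{lem:mtransp}, and since the leaves are totally geodesic (Lemma \ref{lem:tor}) this kills the intrinsic curvature of $S_\epsilon$ outside an $\calO(\epsilon)$-tube around $\bbF$. The steps you spell out --- the verification that $m_i \in \bbD_\epsilon(x)$, the Gauss-formula transfer to the induced connection, the scaled trigonometric covering estimate, and the ambient-to-intrinsic distance comparison --- are all left implicit in the paper's very terse proof, so your version is a more detailed rendering of the same argument.
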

\begin{proof}
We can define vectors $(e_1(x), e_2(x))$ as in (\ref{eq:eone}, \ref{eq:etwo}) starting with $e'= m_i$. We notice that then $e_1(x) = m_i$, for $x$ in the domain $U_i^\epsilon$ defined in the previous Lemma. Since there we have $\nabla^\epsilon m_i = 0$, there is no curvature in $S_\epsilon \cap U_i^\epsilon$.
\end{proof}
Next we evaluate the integral of densitized curvature inside $S_\epsilon$.
\begin{proposition}\label{prop:intkappabis}
For the metric induced by $\sigma_\epsilon$ in $S_\epsilon$, the densitized scalar curvature has integral equal to the deficit angle, defined in equation (\ref{eq:deficit}).
\end{proposition}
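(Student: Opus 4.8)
The plan is to transport the two-dimensional argument of Propositions \ref{prop:holone}, \ref{prop:holtwo} and \ref{prop:intkappa} onto the surface $S_\epsilon$, equipped with the metric induced by $\sigma_\epsilon$. The intersection $S_\epsilon \cap \bbF$ is a single point $x_0$, since $\dim S_\epsilon + \dim \bbF = \dim \bbE$ and, by construction, the foliation meets $\bbF$ transversally (for each $y$ there is a unique $x \in \bbF$ with $y \in S_\epsilon(x)$, and $S_\epsilon(x) \cap \bbF = \{x\}$). By Proposition \ref{prop:curloc} the induced curvature is supported within distance $\calO(\epsilon)$ of $x_0$, so it suffices to compute, in two ways, the holonomy of the induced Levi-Civita connection around a loop in $S_\epsilon$ encircling $x_0$ at a distance large enough that, between successive crossings of the half-hyperplanes, it lies inside the regions $U_i^\epsilon \cap S_\epsilon$.

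First I would record the facts that make the reduction work. Because Lemma \ref{lem:tor} shows the surfaces $S_\epsilon$ have vanishing second fundamental form, they are totally geodesic, so the intrinsic parallel transport on $S_\epsilon$ agrees with the ambient transport $\pt_{\nabla^\epsilon}$ restricted to tangent vectors, and the intrinsic curvature two-form is the tangential part of the ambient one. Next, in the interior of sector $i+1/2$ the metric $\sigma_\epsilon$ equals the constant $\rho_{i+1/2}$, the distribution $\bbD$ is the constant plane orthogonal to $\bbF$ for $\rho_{i+1/2}$, and both $m_i$ and $m_{i+1}$ lie in this plane; hence the tangent plane to $S_\epsilon$ there is spanned by $m_i$ and $m_{i+1}$, and the angle between them for the induced metric is exactly $\theta_{i+1/2}$ as defined before (\ref{eq:deficit}). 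Finally, by Lemma \ref{lem:mtransp} the field $m_i$ is tangent to $S_\epsilon$ and satisfies $\nabla^\epsilon m_i = 0$ on $U_i^\epsilon$, hence is intrinsically parallel on $U_i^\epsilon \cap S_\epsilon$.

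With these in hand the two computations proceed as in the plane. By Stokes' theorem and the identity $F = -\kappa J \mu$, the holonomy around the loop equals $\exp\big((\int_{S_\epsilon}\kappa\mu)\,J\big)$, exactly as in Proposition \ref{prop:holone}. On the other hand, splitting the loop into arcs $\gamma_i \subset U_i^\epsilon \cap S_\epsilon$ and using that $m_i$ is parallel along $\gamma_i$, parallel transport carries the orthonormal frame built from $m_i$ to the one built from $m_{i+1}$, picking up the rotation by $-\theta_{i+1/2}$ at each sector, so the holonomy also equals $\exp\big(-(\sum_i\theta_{i+1/2})\,J\big)$, as in Proposition \ref{prop:holtwo}. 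Comparing the two expressions gives $\int_{S_\epsilon}\kappa\mu + \sum_i \theta_{i+1/2} \in 2\pi\bbZ$.

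To pin down the integer I would run the homotopy argument of Proposition \ref{prop:intkappa}: replacing $\rho$ by $\rho(s) = s\rho + (1-s)g$ deforms the whole construction continuously in $s$, so the continuous, $2\pi\bbZ$-valued quantity $\int_{S_\epsilon}\kappa\mu + \sum_i\theta_{i+1/2}$ is constant; at $s=0$ the metric is flat, $S_\epsilon$ is a genuine plane, and the value is $2\pi$, whence $\int_{S_\epsilon}\kappa\mu = 2\pi - \sum_i\theta_{i+1/2} = d$. The main obstacle I anticipate is the first, geometric, step: one must check carefully that the totally geodesic property really identifies the intrinsic curvature integral of $S_\epsilon$ with the ambient holonomy, and that the induced metric on $S_\epsilon$ genuinely has the piecewise-flat-plus-smoothing structure with the correct angles $\theta_{i+1/2}$ needed for the frame argument. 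Once this dictionary between the ambient and the intrinsic two-dimensional geometry is established, the holonomy and homotopy steps are routine repetitions of the planar proofs.
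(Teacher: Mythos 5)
Your proposal is correct and follows essentially the same route as the paper's own proof: transport Propositions \ref{prop:holone} and \ref{prop:holtwo} to $S_\epsilon$ using adapted frames, the fact that $\sigma_\epsilon$ equals $\rho_{i\pm 1/2}$ at the chosen points, and the parallelism of $m_i$ from Lemma \ref{lem:mtransp}, then fix the integer ambiguity by the homotopy argument of Proposition \ref{prop:intkappa}. Your explicit justification of the ambient-versus-intrinsic dictionary (total geodesy of $S_\epsilon$ via Lemma \ref{lem:tor}, transversality to $\bbF$, and the identification of the angles $\theta_{i+1/2}$) spells out details the paper leaves to its preceding discussion, but it is the same argument.
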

\begin{proof}
We may extend the techniques used in the two-dimensional setting using framefields adapted to $S^\epsilon$, as was already done in Proposition \ref{prop:curloc}.

Firstly, Proposition \ref{prop:holone} carries over by choosing an orthonormal framefield $(e_1, e_2)$ adapted to $S_\epsilon$, and integrating over a domain $T$ inside $S_\epsilon$.

Secondly, Proposition \ref{prop:holtwo}, also extends. We choose points $p_{i+1/2}$ on $ S_\epsilon$ inside sector $i+1/2$, at distance at least $\epsilon$ from the interfaces. We join them by curves in $S_\epsilon$ enclosing a two-dimensional domain $T$ inside $S_\epsilon$, containing the support of the curvature. The vectors $n_i^\pm$ appearing in the proof of Proposition \ref{prop:holtwo} are chosen such that $(m_i,n_i^\pm)$ is tangent to $S_\epsilon$ at $p_{i\pm 1/2}$, in addition to being orthonormal and oriented for $\rho_{i \pm 1/2}$ as before. Notice that, at the point $p_{i\pm 1/2}$, $\sigma_\epsilon$ equals $\rho_{i \pm 1/2}$. Notice also that, by Lemma \ref{lem:mtransp}, the vector $m_i$ is parallel-transported to itself, along the chosen curve from $p_{i-1/2}$ to $p_{i+1/2}$. Therefore all the previous arguments carry over to the more general setting.

We then conclude exactly as in Proposition \ref{prop:intkappa}.
\end{proof}

Finally we want to deduce results on the whole space $\bbE$. First some estimates: 
\begin{proposition}\label{prop:estpoint}
The curvature of $\sigma_\epsilon$ is supported in a tube around $\bbF$ of radius $\calO(\epsilon)$. Moreover, the  curvature is bounded by $\calO(\epsilon^{-2})$ pointwise, uniformly in space.
\end{proposition}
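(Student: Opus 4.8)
The plan is to obtain the support statement directly from the parallel vector fields already constructed, and the pointwise bound from the self-similarity of the family $\sigma_\epsilon$.

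For the support, I would combine Lemma \ref{lem:mtransp} with Lemma \ref{lem:tor}. On the open set $U_i^\epsilon$ one has $\nabla^\epsilon m_i = 0$, and for every $f \in \bbF$ one has $\nabla^\epsilon f = 0$; hence the codimension-one distribution $\myspan(\bbF, m_i)$ is spanned by $\nabla^\epsilon$-parallel fields. Since $R(\cdot,\cdot)X = 0$ for any parallel field $X$, and since the Riemann tensor is symmetric under exchange of its two index pairs and antisymmetric within each pair, every component $R(X,Y,Z,W)$ with at least one argument in this codimension-one subspace vanishes on $U_i^\epsilon$; as the orthogonal complement is one-dimensional and $R$ is antisymmetric in its first pair, this forces the whole curvature tensor to vanish there. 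Intersecting with a plane transverse to $\bbF$ reproduces exactly the two-dimensional configuration of half-lines and sectors, so the complement of $\bigcup_i U_i^\epsilon$ lies in a tube around $\bbF$ whose radius is $\epsilon$ times the quantity $\max_i 1/\cos(\pi/2 - \alpha_{i+1/2}/2)$ of the two-dimensional corollary, that is $\calO(\epsilon)$.

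For the pointwise bound I would invoke self-similarity. As in the two-dimensional computation, the sectors and $\bbF$ are invariant under the dilation $\Phi_\epsilon(x) = \epsilon^{-1}x$ and $\rho$ is constant on sectors, so a change of variables in the convolution gives $\sigma_\epsilon = \epsilon^2 \Phi_\epsilon^\star \sigma_1$. Multiplying a metric by a constant leaves the Levi-Civita connection unchanged, and pullback by a diffeomorphism is natural, so the scalar curvature transforms as $\kappa_\epsilon(x) = \epsilon^{-2}\kappa_1(\epsilon^{-1}x)$. The metric $\sigma_1 = \phi \ast \rho$ is smooth and invariant under translations along $\bbF$; by the support statement at $\epsilon = 1$ its curvature is supported in a bounded transverse tube, so $\kappa_1$ is a smooth function, translation-invariant along $\bbF$ with compact transverse support, whence $\|\kappa_1\|_\infty < \infty$ and $|\kappa_\epsilon| \le \epsilon^{-2}\|\kappa_1\|_\infty = \calO(\epsilon^{-2})$ uniformly in space.

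The main obstacle is the passage from ``a codimension-one family of parallel directions'' to ``the full curvature tensor vanishes'': this is the one place where the two-dimensional bookkeeping does not transfer verbatim, and it must be handled through the symmetries of $R$ rather than through sectional curvatures alone. A secondary point requiring care is verifying that the scaling identity $\sigma_\epsilon = \epsilon^2 \Phi_\epsilon^\star \sigma_1$ survives the convolution in dimension $\ge 3$, which is a short computation resting on the scale-invariance of $\rho$.
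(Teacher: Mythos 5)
Your proof is correct, but its second half takes a genuinely different route from the paper's. For the support statement you and the paper use the same ingredients, Lemmas \ref{lem:tor} and \ref{lem:mtransp}; the paper simply asserts that these lemmas kill the curvature on $U_i^\epsilon$, whereas you supply the linear algebra behind that assertion — a codimension-one distribution spanned by $\nabla^\epsilon$-parallel fields annihilates the whole Riemann tensor via its pair-exchange symmetry and antisymmetries — which is a worthwhile elaboration, since (as you note) the two-dimensional situation, where a single parallel field kills the single sectional curvature, does not transfer verbatim. For the pointwise $\calO(\epsilon^{-2})$ bound the two arguments diverge: the paper works locally, writing the curvature in coordinates and using that $\partial^k \sigma_\epsilon = (\partial^k \phi_\epsilon) \ast \rho = \calO(\epsilon^{-k})$ because $\rho$ is bounded, together with boundedness of $\sigma_\epsilon^{-1}$ from positive definiteness; you instead invoke the global self-similarity $\sigma_\epsilon = \epsilon^2 \Phi_\epsilon^\star \sigma_1$, valid because the sectors are cones invariant under dilation about $\bbF$ — an identity the paper does use, but only in its two-dimensional section. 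Your route is cleaner and yields the exact identity $\kappa_\epsilon(x) = \epsilon^{-2}\kappa_1(\epsilon^{-1}x)$ rather than a mere upper bound; the paper's route is local and therefore more robust: it needs only boundedness and positive definiteness of $\rho$, so it applies verbatim near any hinge of a general simplicial complex, which is exactly how it is recycled in the proof of the final Theorem, where no dilation symmetry exists and your scaling identity is unavailable. A minor further difference: the paper's estimate bounds the full curvature tensor, while your scaling statement is phrased for the scalar curvature, though it extends to the Riemann tensor with the same exponent.
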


\begin{proof}
For the first assertion, we use Lemmas \ref{lem:tor} and \ref{lem:mtransp}. They show that there is no curvature on the domains of type $U_i^\epsilon$. The union of these domains contain the exterior of tubes around $\bbF$ with radius $C\epsilon$, for $C$ sufficiently large.

The second assertion follows from explicit coordinate expressions for the curvature (e.g. \cite{Tay96I} equation (3.6) page 469), taking into account that, since the Regge metric $\rho$ is bounded, the partial derivatives of order $k$ of the smoothed metric $\sigma_\epsilon$ are $\calO(\epsilon^{-k})$ pointwise, uniformly in space. The inverse of $\sigma_\epsilon$ also remains bounded, by positive definiteness of $\rho$.
\end{proof}

\begin{proposition} Let $d$ be the deficit angle. We have, as $\epsilon \to 0$:
\begin{equation}
\int_{\bbE} \psi \kappa_\epsilon \mu_\epsilon \to \int_{\bbF} d \psi.
\end{equation}
\end{proposition}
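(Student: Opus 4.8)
The plan is to reduce the $\dim\bbE$-dimensional problem to the two-dimensional leaf-wise computation of Proposition \ref{prop:intkappabis}, and then to integrate over the hinge $\bbF$. First I would exploit the product structure furnished by Lemma \ref{lem:tor}. Choosing a constant basis $f_1,\dots,f_{\dim\bbF}$ of $\bbF$, metric compatibility together with $\nabla^\epsilon f_j=0$ shows that the entries $\sigma_\epsilon(f_j,f_k)$ are constant on $\bbE$; evaluating far inside any sector they equal $\rho(f_j,f_k)$, which is the (well-defined) metric induced by $\rho$ on $\bbF$, independently of $\epsilon$. After orthonormalizing we obtain $\dim\bbF$ parallel orthonormal vector fields spanning $\bbF$ at every point. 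Consequently $(\bbE,\sigma_\epsilon)$ splits isometrically as a Riemannian product of flat $\bbF$ with the leaves $S_\epsilon$, the two factors being $\sigma_\epsilon$-orthogonal by the very definition of $\bbD_\epsilon$. Two facts follow: the volume form factors as $\mu_\epsilon=\mathrm{vol}_{\rho|_\bbF}\wedge\mathrm{area}_{S_\epsilon}$, and, since the parallel $\bbF$-directions carry no curvature and $S_\epsilon$ is totally geodesic, the ambient scalar curvature $\kappa_\epsilon$ coincides pointwise with the intrinsic scalar curvature $\kappa_{S_\epsilon}$ of the two-dimensional leaf through that point (this is the relation alluded to via \cite{Wal84}).

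Next I would apply Fubini along the foliation. Writing $\pi:\bbE\to\bbF$ for the projection sending $y$ to the unique $x\in\bbF$ with $y\in S_\epsilon(x)$, the factorization of $\mu_\epsilon$ gives
\begin{equation}
\int_\bbE \psi\,\kappa_\epsilon\,\mu_\epsilon = \int_\bbF g_\epsilon(x)\,\mathrm{vol}_{\rho|_\bbF}(x), \qquad g_\epsilon(x):=\int_{S_\epsilon(x)} \psi\,\kappa_{S_\epsilon}\,\mathrm{area}_{S_\epsilon}.
\end{equation}
By translation invariance along $\bbF$ the leaves are mutual translates, so each $S_\epsilon(x)$ meets $\bbF$ in the single point $x$ and carries the same curvature profile, merely translated.

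It then remains to pass to the limit leaf by leaf. Proposition \ref{prop:intkappabis} gives $\int_{S_\epsilon(x)}\kappa_{S_\epsilon}\,\mathrm{area}_{S_\epsilon}=d$ for every $x$, while Proposition \ref{prop:curloc} confines the leaf curvature to within $\calO(\epsilon)$ of $x$. Combining the pointwise bound $\kappa_{S_\epsilon}=\calO(\epsilon^{-2})$ and the $\calO(\epsilon^2)$ area of this region (both from Proposition \ref{prop:estpoint}) yields a uniform bound $\int_{S_\epsilon(x)}|\kappa_{S_\epsilon}|\,\mathrm{area}_{S_\epsilon}\le C$. Estimating
\begin{equation}
|g_\epsilon(x)-d\,\psi(x)| \le \int_{S_\epsilon(x)} |\psi-\psi(x)|\,|\kappa_{S_\epsilon}|\,\mathrm{area}_{S_\epsilon} \le C\,\omega_\psi(\calO(\epsilon)),
\end{equation}
with $\omega_\psi$ the modulus of continuity of $\psi$, shows $g_\epsilon(x)\to d\,\psi(x)$ pointwise. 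Since the $g_\epsilon$ are uniformly bounded and supported in a fixed bounded subset of $\bbF$ (the curvature tube meets $\supp\psi$ in a bounded set), dominated convergence on $\bbF$ gives $\int_\bbF g_\epsilon\,\mathrm{vol}_{\rho|_\bbF}\to d\int_\bbF\psi$, as claimed.

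The main obstacle is the first paragraph: justifying rigorously that the single identity $\nabla^\epsilon f=0$ upgrades to a genuine isometric product splitting with an $\epsilon$-independent flat $\bbF$-factor, so that both the volume form factorizes cleanly and $\kappa_\epsilon$ reduces to the purely two-dimensional quantity already controlled by Proposition \ref{prop:intkappabis}. Everything after that is the soft measure-theoretic step of trading the signed leaf-mass $d$ against the continuous weight $\psi$, for which the uniform total-variation bound $C$ per leaf---rather than merely the signed integral---is the one quantitative input that must be secured.
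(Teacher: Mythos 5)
Your proposal is correct and takes essentially the same route as the paper: a Fubini decomposition over the foliation by leaves $S_\epsilon(x)$ (resting on the product structure coming from Lemma \ref{lem:tor} and the vanishing extrinsic curvature), the per-leaf identity $\int_{S_\epsilon(x)} \kappa_\epsilon' \mu_\epsilon' = d$ from Proposition \ref{prop:intkappabis}, and a modulus-of-continuity error bound obtained from the $\calO(\epsilon)$ support and $\calO(\epsilon^{-2})$ pointwise estimates of Proposition \ref{prop:estpoint}. The only cosmetic differences are that the paper splits $\psi = \psi(x) + (\psi - \psi(x))$ directly under the double integral instead of phrasing the last step as pointwise convergence plus dominated convergence, and it leaves the isometric-splitting justification (which you rightly flag as the point needing care) implicit, with a citation to \cite{Wal84}.
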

\begin{proof}
Let $\omega$ be a modulus of continuity for $\psi$.

Let $\kappa_\epsilon'$ and $\mu_\epsilon'$ denote the scalar curvature and volume form on the submanifolds $S_\epsilon$ of the metric induced by $\sigma_\epsilon$. Let  $\mu_\bbF$ be the volume form on $\bbF$ induced by $\rho$. Notice that the smoothed metrics $\sigma_\epsilon$ agree everywhere with $\rho$ when evaluated on two vectors in $\bbF$. Because the extrinsic curvature is $0$, we have:
\begin{align}
\int_{\bbE} \psi \kappa_\epsilon \mu_\epsilon & = \int_{\bbF} \mu_\bbF (x) \int_{S_\epsilon(x)} \psi \kappa_\epsilon' \mu_\epsilon',\\
& = \int_{\bbF} \mu_\bbF (x) \int_{S_\epsilon(x)} \psi(x)\kappa_\epsilon' \mu_\epsilon' +   \int_\bbF \mu_\bbF(x) \int_{S_\epsilon(x)} (\psi - \psi(x)) \kappa_\epsilon' \mu_\epsilon', \label{eq:twoterms}\\
& = \int_{\bbF} \mu_\bbF (x) d \psi(x) + \calO(\omega(\epsilon)).
\end{align}
In equation (\ref{eq:twoterms}), we used Proposition \ref{prop:intkappabis} to evaluate the first term and Proposition \ref{prop:estpoint} to estimate the second.

This concludes the proof.
\end{proof}

Finally we conclude:
\begin{theorem}
For a Regge metric on a simplicial complex placed in a Euclidean space, if we smooth it by convolution, with parameter $\epsilon$, the densitized scalar curvatures converge in the sense of measures, to the measure defined by Regge calculus (supported on hinges and defined by the deficit angles), as $\epsilon \to 0$.
\end{theorem}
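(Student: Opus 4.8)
The plan is to reduce the global statement for an arbitrary simplicial complex to the single-hinge model already analysed, exploiting the locality of convolution. Since $\phi$ is supported in the unit ball, the value of $\sigma_\epsilon$ at a point $x$ depends only on $\rho$ restricted to the ball $B_g(x,\epsilon)$, and hence so does $\kappa_\epsilon$ at $x$. I would fix a continuous compactly supported test function $\psi$ and introduce a partition of unity subordinate to a cover adapted to the stratification of the complex: one family of patches covering the relative interiors of the hinges (the codimension-two simplices), staying at distance much larger than $\epsilon$ from their boundaries and from all other hinges, and a remaining family covering an $\calO(\epsilon)$-neighbourhood of the codimension-three skeleton. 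By linearity the integral $\int \psi \kappa_\epsilon \mu_\epsilon$ splits into contributions indexed by these patches, and it suffices to treat each family separately.

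For a patch centred at a point $x_0$ in the relative interior of a hinge $h$, the key observation is that for $\epsilon$ small enough, within the reach of the smoothing, the Regge metric $\rho$ coincides with the flat single-hinge model metric determined by the constant values $\rho_{i+1/2}$ on the $n$-simplices surrounding $h$. Indeed $h$ is an affine $(n-2)$-simplex carrying a constant induced metric, the adjacent $n$-simplices meet it along half-hyperplanes, and within distance $\calO(\epsilon)$ of $x_0$ this configuration is indistinguishable from the idealised one in which $\bbF$ is a linear subspace of codimension two with constant-metric sectors. Hence $\sigma_\epsilon$, and therefore $\kappa_\epsilon \mu_\epsilon$, agrees on this patch with the corresponding quantity for the model, and the convergence result proved for a single flat hinge applies verbatim to the localised test function. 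The relevant deficit angle is exactly $d_h$, which is constant along $h$ because the surrounding simplex metrics are. Summing over all hinges produces precisely the Regge measure $\psi \mapsto \sum_h d_h \int_h \psi$.

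It remains to show that the patches surrounding the codimension-three skeleton contribute negligibly as $\epsilon \to 0$. Here I would invoke the uniform estimates of Proposition \ref{prop:estpoint}, stated for the whole complex: the curvature of $\sigma_\epsilon$ is bounded pointwise by $\calO(\epsilon^{-2})$, uniformly in space, since the order-$k$ derivatives of the convolved metric are $\calO(\epsilon^{-k})$ while the inverse metric stays bounded by positive definiteness of $\rho$. A tube of radius $\calO(\epsilon)$ around the $(n-3)$-dimensional codimension-three skeleton has volume $\calO(\epsilon^3)$, so its contribution is at most $\calO(\epsilon^{-2}) \cdot \calO(\epsilon^3) = \calO(\epsilon)$, which vanishes. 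The same counting controls the error from replacing $\psi$ by its value along each hinge by the modulus of continuity of $\psi$, exactly as in the single-hinge proposition.

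The main obstacle is precisely this last point: near the codimension-three skeleton several hinges come together, the clean flat-hinge model breaks down, and one cannot compute the curvature there explicitly. The argument does not attempt to; instead it relies on the bound $\kappa_\epsilon = \calO(\epsilon^{-2})$ holding \emph{uniformly}, including in these corner regions, together with the favourable volume scaling of their neighbourhoods. Establishing that uniform pointwise bound in full generality---at every point of the complex, irrespective of how many simplices meet there, while keeping the inverse smoothed metric bounded---is the crux that legitimises both the localisation and the summation over hinges.
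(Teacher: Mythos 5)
Your proposal is correct and follows essentially the same route as the paper's (very terse) proof: localize to the single-hinge model near the relative interiors of the hinges, where the single-hinge convergence result and the estimates of Proposition \ref{prop:estpoint} apply, and absorb the codimension-three skeleton into an error of size $\calO(\epsilon^{-2}) \cdot \calO(\epsilon^{3}) = \calO(\epsilon)$. Your partition-of-unity formulation and your emphasis on the uniform pointwise curvature bound simply make explicit what the paper's two-sentence argument leaves implicit.
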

\begin{proof}
As in Proposition \ref{prop:estpoint}, the curvature is located in a tubular neighborhood of the codimension two skeleton with radius $\calO(\epsilon)$, and is pointwise bounded by $\calO(\epsilon^{-2})$. We proceed as in the previous proposition, with an additional contribution from the codimension three skeleton, which is $\calO(\epsilon)$.

\end{proof}

\section*{Acknowledgements}
Stimulating discussions with Tore G. Halvorsen, concerning Lattice Gauge Theory, are gratefully acknowledged.

This work was supported by the European Research Council through the FP7-IDEAS-ERC Starting Grant scheme, project 278011 STUCCOFIELDS.

\bibliography{../Bibliography/alexandria,../Bibliography/mybibliography}{}

\begin{thebibliography}{10}

\bibitem{Chr04M3AS}
S.~H. Christiansen.
\newblock A characterization of second-order differential operators on finite
  element spaces.
\newblock {\em Math. Models Methods Appl. Sci.}, 14(12):1881--1892, 2004.

\bibitem{Chr11NM}
S.~H. Christiansen.
\newblock On the linearization of {R}egge calculus.
\newblock {\em Numerische Mathematik}, 119:613--640, 2011.
\newblock 10.1007/s00211-011-0394-z.

\bibitem{ChrHal09BIT}
S.~H. Christiansen and T.~G. Halvorsen.
\newblock Convergence of lattice gauge theory for {M}axwell's equations.
\newblock {\em BIT}, 49(4):645--667, 2009.

\bibitem{ChrHal11IMA}
S.~H. Christiansen and T.~G. Halvorsen.
\newblock Discretizing the {M}axwell-{K}lein-{G}ordon equation by the lattice
  gauge theory formalism.
\newblock {\em IMA J. Numer. Anal.}, 31(1):1--24, 2011.

\bibitem{ChrHal11SINUM}
S.~H. Christiansen and T.~G. Halvorsen.
\newblock A gauge invariant discretization on simplicial grids of the
  {S}chr\"odinger eigenvalue problem in an electromagnetic field.
\newblock {\em SIAM Journal on Numerical Analysis}, 49(1):331--345, 2011.

\bibitem{ChrHal12JMP}
S.~H. Christiansen and T.~G. Halvorsen.
\newblock A simplicial gauge theory.
\newblock {\em J. Math. Phys.}, 53(3):033501, 17, 2012.

\bibitem{FriLee84}
R.~Friedberg and T.~D. Lee.
\newblock Derivation of {R}egge's action from {E}instein's theory of general
  relativity.
\newblock {\em Nuclear Phys. B}, 242(1):145--166, 1984.

\bibitem{KobNom63}
S.~Kobayashi and K.~Nomizu.
\newblock {\em Foundations of differential geometry. {V}ol {I}}.
\newblock Interscience Publishers, a division of John Wiley \& Sons, New
  York-Lond on, 1963.

\bibitem{Reg61}
T.~Regge.
\newblock General relativity without coordinates.
\newblock {\em Nuovo Cimento (10)}, 19:558--571, 1961.

\bibitem{Tay96I}
M.~E. Taylor.
\newblock {\em Partial differential equations. {I}}, volume 115 of {\em Applied
  Mathematical Sciences}.
\newblock Springer-Verlag, New York, 1996.
\newblock Basic theory.

\bibitem{Tay96II}
M.~E. Taylor.
\newblock {\em Partial differential equations. {II}}, volume 116 of {\em
  Applied Mathematical Sciences}.
\newblock Springer-Verlag, New York, 1996.
\newblock Qualitative studies of linear equations.

\bibitem{Wal84}
R.~M. Wald.
\newblock {\em General relativity}.
\newblock University of Chicago Press, Chicago, IL, 1984.

\bibitem{Wil74}
K.~G. Wilson.
\newblock {C}onfinement of quarks.
\newblock {\em {Phys. Rev. D}}, {10}({8}):{2445--2459}, {1974}.

\end{thebibliography}
\bibliographystyle{plain}

\end{document}